\definecolor{labelkey}{rgb}{0,0.08,0.45}
\definecolor{refkey}{rgb}{0,0.6,0.0}
\definecolor{Brown}{rgb}{0.45,0.0,0.05}
\definecolor{lime}{rgb}{0.00,0.8,0.0}
\definecolor{lblue}{rgb}{0.5,0.5,0.99}
\newcommand{\menge}[2]{\big\{{#1}~\big |~{#2}\big\}}
\newcommand{\To}{\ensuremath{\rightrightarrows}}
\newcommand{\fenv}[1]%
{\ensuremath{\,\overrightarrow{\operatorname{env}}_{#1}}}
\newcommand{\benv}[1]%
{\ensuremath{\,\overleftarrow{\operatorname{env}}_{#1}}}
\newcommand{\scal}[2]{\left\langle{#1},{#2}  \right\rangle}
\newcommand{\RR}{\ensuremath{\mathbb R}}
\newcommand{\cR}{\ensuremath{\mathcal R}}
\newcommand{\NN}{\ensuremath{\mathbb N}}
\newcommand{\NON}{\ensuremath{{\mathcal N}(X)}}
\newcommand{\FF}{\ensuremath{{\mathcal F}}}
\newcommand{\CC}{\ensuremath{{\mathcal C}}}
\newcommand{\MM}{\ensuremath{\mathcal M}(X)}
\newcommand{\JJ}{\ensuremath{\mathcal J}(X)}
\newcommand{\dom}{\ensuremath{\operatorname{dom}}}
\newcommand{\gr}{\ensuremath{\operatorname{gr}}}
\newcommand{\inte}{\ensuremath{\operatorname{int}}}
\newcommand{\ran}{\ensuremath{\operatorname{ran}}}
\newcommand{\Fix}{\ensuremath{\operatorname{Fix}}}
\newcommand{\Id}{\ensuremath{\operatorname{Id}}}
\newcommand{\Gc}{\ensuremath{\stackrel{g}{\longrightarrow}}}
\newcommand{\bB}{\ensuremath{{\mathbf{B}}}}
\newcommand{\tro}{\ensuremath{\tilde{\rho}}}
\newcommand{\hro}{\ensuremath{\hat{\rho}}}
\newcommand{\kK}{\ensuremath{\mathcal{K}}}
\newtheorem{theorem}{Theorem}[section]
\newtheorem{lemma}[theorem]{Lemma}
\newtheorem{corollary}[theorem]{Corollary}
\newtheorem{proposition}[theorem]{Proposition}
\newtheorem{definition}[theorem]{Definition}
\theoremstyle{plain}{\theorembodyfont{\rmfamily}
}
\theoremstyle{plain}{\theorembodyfont{\rmfamily}
}
\theoremstyle{plain}{\theorembodyfont{\rmfamily}
}
\theoremstyle{plain}{\theorembodyfont{\rmfamily}
\newtheorem{example}[theorem]{Example}}
\newtheorem{fact}[theorem]{Fact}
\theoremstyle{plain}{\theorembodyfont{\rmfamily}
\newtheorem{remark}[theorem]{Remark}}
\begin{document}

\title{\textrm{Most Maximally Monotone Operators  Have \\ a Unique Zero
and a Super-regular Resolvent}}

\author{
Xianfu\ Wang\thanks{Mathematics,
University of British Columbia,
Kelowna, B.C.\ V1V~1V7, Canada.
E-mail:  \texttt{shawn.wang@ubc.ca}.}
}

\date{January 21, 2013}
\maketitle

\vskip 8mm

\begin{abstract} \noindent
Maximally monotone operators play important roles in optimization, variational analysis and differential
equations. Finding zeros of maximally monotone operators has been a central topic. In a Hilbert space, we show that
most resolvents are super-regular, that
most maximally monotone operators have a unique zero and
that the set of strongly monotone mapping is of first category although each strongly monotone operator
has a unique zero. The results
are established
by applying the Baire Category Theorem to the space of nonexpansive mappings.
\end{abstract}

{\small
\noindent
{\bfseries 2010 Mathematics Subject Classification:}
{Primary 47H05, 47H10;
Secondary 54E52, 47H09, 54E50.
}

\noindent {\bfseries Keywords:}
Asymptotic regularity, Baire Category, fixed point, graphical convergence, maximally monotone operator, nonexpansive mapping, resolvent, reflected resolvent, super-regularity, zeros of monotone operator, weakly contractive mapping.
}

\section{Introduction}
Throughout, $X$ is a real Hilbert space whose inner product is denoted by
$\langle x,y\rangle$ and induced inner product norm by $\|x\|:=\sqrt{\langle x,x\rangle}$ for
$x,y\in X$. Recall that a set-valued operator $A\colon X\To X$ (i.e., $(\forall x\in
X)$ $Ax\subseteq X$) with {graph} $\gr A$ is \emph{monotone} if
\begin{equation}
(\forall (x,u)\in\gr A)
(\forall (y,v)\in\gr A)
\quad
\scal{x-y}{u-v}\geq 0
\end{equation}
where $\gr A:=\{(x,y)\in X\times X:\ y\in Ax\}$,
and that $A$ is \emph{maximally monotone} if it is impossible to find a
proper extension of $A$ that is still monotone. We call $A:X\To X$
\emph{strongly monotone} \cite{BC2011,Rock98} if there exists $\varepsilon>0$ such that
$A-\varepsilon\Id $ is monotone in which $\Id\colon X\to X\colon x\mapsto x$ denotes
the \emph{identity operator}.

We shall work in the \emph{space of nonexpansive mappings} defined on $X$, i.e.,
$$\NON:=\{T:X\to X:\ \|Tx-Ty\|\leq \|x-y\|, \forall \ x,y\in X\};$$
the \emph{space of firmly nonexpansive mappings}
$$\JJ:=\{T: X\to X:\ \|Tx-Ty\|^2\leq \scal{Tx-Ty}{x-y}, \forall \ x,y\in X\};$$
and the space of \emph{maximal monotone operators}
$$\MM:=\{A:X\To X:\ A \text{ is maximally monotone}\}$$
endowed with a metric defined in Section~\ref{s:generic}.
The reason to investigate nonexpansive mappings defined on $X$ is that they are directly related
to maximally monotone operators.

\emph{In this note, we study generic properties of
$\NON$, $\MM$ and $\JJ$ by the Baire Category Theorem.
A recent result due to Reich and Zaslavski implies that  most nonexpansive mappings in $\NON$ are super-regular
so that each of them has a unique fixed point. Utilizing
Reich and Zaslavaski's technique, we show that
(i) Most resolvents in $\JJ$
are super regular, thus asymptotically regular; (ii) Most maximally monotone operators in $\MM$ have a unique zero;
(iii) The set of strongly monotone operators is only a first category set in $\MM$ even though it is dense.}

While extensive study has been done
on $\NON$ \cite{BC2011,GR,GK,Blasi,Blasi76,reichcom,reichmath00,zaslavski00,zaslavski01} and
on $\MM$ \cite{attouch,BC2011,Rock98,Simons2,Zeidler2b}, generic properties on $\MM$ and $\JJ$
seem new. They are particularly interesting for the optimization field. Note that De Blasi and Myjak
only considered generic properties of continuous bounded monotone operators on a bounded set in \cite{Blasi79}.

In the reminder of this section, we introduce some definitions, basic facts and preliminary results.
For $A\in\MM$, we define its \emph{resolvent} and \emph{reflected resolvent} (or Cayley transform)
by
$$J_{A}:=(A+\Id)^{-1},\quad R_{A}:=2J_{A}-\Id.$$
 It is well-known that
$J_{A}+J_{A^{-1}}=\Id$, $R_{A}+R_{A^{-1}}=0$, see, e.g., \cite{Rock98}, \cite[Proposition 4.1]{moursi12}.
Both resolvent and reflected resolvent play a key role in the proximal point algorithm and Douglus-Rachford algorithm \cite{BC2011,rockprox,combettes02,Comb04,lions,moursi12}.

The following well-known characterizations about firmly nonexpansive mappings, nonexpansive mappings and
maximally monotone operators are crucial.

\begin{fact}
\label{f:firm}
{\rm (See, e.g., \cite{BC2011,GK,GR}.)}
Let $T\colon X\to X$.
Then the following are equivalent:
\begin{enumerate}
\item $T$ is firmly nonexpansive.
\item $2T-\Id$ is nonexpansive.
\item $(\forall x\in X)(\forall y\in X)$
$\|Tx-Ty\|^2 \leq \scal{x-y}{Tx-Ty}$.
\item
\label{f:firmsymm}
$(\forall x\in X)(\forall y\in X)$
$0\leq \scal{Tx-Ty}{(\Id-T)x-(\Id-T)y}$.
\end{enumerate}
\end{fact}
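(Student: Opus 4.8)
The plan is to prove all four equivalences by a short cycle of elementary algebraic identities in the Hilbert space $X$, with no analytic ingredient needed. Throughout, fix $x,y\in X$ and abbreviate $u:=Tx-Ty$ and $w:=x-y$.

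First I would record the polarization-type identity $\|2u-w\|^2=4\|u\|^2-4\scal{u}{w}+\|w\|^2$, together with the observation that $(2T-\Id)x-(2T-\Id)y=2u-w$. Hence $2T-\Id$ is nonexpansive, i.e.\ $\|2u-w\|^2\le\|w\|^2$ for all $x,y$, if and only if $4\|u\|^2-4\scal{u}{w}\le 0$ for all $x,y$, i.e.\ if and only if $\|u\|^2\le\scal{u}{w}$ for all $x,y$. This yields (ii)$\,\Leftrightarrow\,$(iii) immediately.

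Next, (i)$\,\Leftrightarrow\,$(iii) is merely the definition of firm nonexpansiveness adopted in the paper (the inequality defining $\JJ$), using symmetry of the inner product, so nothing is to be done there beyond quoting it. For (iii)$\,\Leftrightarrow\,$(iv), I would expand $\scal{Tx-Ty}{(\Id-T)x-(\Id-T)y}=\scal{u}{w-u}=\scal{u}{w}-\|u\|^2$, so nonnegativity of the left-hand side for all $x,y$ is exactly the assertion $\|u\|^2\le\scal{u}{w}$ for all $x,y$, which is (iii). Chaining these, (ii)$\,\Leftrightarrow\,$(iii)$\,\Leftrightarrow\,$(i) and (iii)$\,\Leftrightarrow\,$(iv), so all four statements are equivalent.

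I do not anticipate any genuine obstacle: the proof is a direct computation. The only points requiring a moment's care are bookkeeping the factor $2$ and the sign when expanding $\|2u-w\|^2$, and writing the substitution $(2T-\Id)x-(2T-\Id)y=2(Tx-Ty)-(x-y)$ correctly; everything else follows by inspection, and the argument is valid verbatim in any real inner product space.
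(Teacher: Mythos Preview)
Your proof is correct. The paper does not actually prove this statement: it is recorded as a \emph{Fact} with a citation to standard references (Bauschke--Combettes, Goebel--Kirk, Goebel--Reich), so there is no proof in the paper to compare against. Your elementary algebraic argument via the substitutions $u=Tx-Ty$, $w=x-y$ and the expansion $\|2u-w\|^2=4\|u\|^2-4\scal{u}{w}+\|w\|^2$ is exactly the standard one found in those references, and all the steps check out.
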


\begin{fact}[Eckstein \& Bertsekas, Minty]\emph{\cite{Minty,Zeidler2b,eckstein92}} \label{mintymono}
 Let $A:X\To X$ be monotone. Then $A$ is maximally monotone if and only if
$J_{A}$ is firmly non-expansive and
has a full domain.
\end{fact}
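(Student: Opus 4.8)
\emph{Plan.} The statement is Minty's theorem together with the Eckstein--Bertsekas reformulation, and I would organize the proof so that the only substantive ingredient is the surjectivity $\ran(\Id+A)=X$; the two ``firm nonexpansiveness'' implications are purely algebraic, so I would settle them first. Assume $A$ maximally monotone and take $(x,u),(y,v)\in\gr A$. Put $a:=x+u$ and $b:=y+v$, so $a-x\in Ax$ and $b-y\in Ay$; thus $x$ and $y$ are resolvent values of $A$ at $a$ and $b$. Monotonicity gives $\scal{x-y}{u-v}\ge 0$, and since $u-v=(a-b)-(x-y)$ this rearranges to $\scal{x-y}{a-b}\ge\|x-y\|^2$. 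By Cauchy--Schwarz $\|x-y\|\le\|a-b\|$, so putting $a=b$ forces $x=y$: $J_A$ is single-valued on $\dom J_A$, and the displayed inequality says precisely that $J_A$ is firmly nonexpansive there, by Fact~\ref{f:firm}. For the converse, suppose $J_A$ is firmly nonexpansive with $\dom J_A=X$. Running the computation in reverse (firm nonexpansiveness of $J_A$ at $a=x+u$, $b=y+v$, together with $a-b=(x-y)+(u-v)$) shows that $J_A^{-1}-\Id=A$ is monotone. Maximality is then immediate: if $(y,v)$ is monotonically related to $\gr A$, then since $\ran(\Id+A)=\ran(J_A^{-1})=\dom J_A=X$ there is $(x,u)\in\gr A$ with $x+u=y+v$, so $u-v=-(y-x)$, and the monotone relation at $(x,u)$ yields $-\|y-x\|^2\ge 0$; hence $(y,v)=(x,u)\in\gr A$.

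\emph{The surjectivity step.} It remains to show $A$ maximally monotone $\Rightarrow\ran(\Id+A)=X$. Since a range-translate $x\mapsto Ax-z$ of a maximally monotone operator is again maximally monotone, it suffices to prove that every maximally monotone $C$ has $0\in\ran(\Id+C)$, i.e.\ there is $\bar x$ with $-\bar x\in C\bar x$. I would fix $(a_0,a_0^*)\in\gr C$ and, for each $r>0$, apply the Debrunner--Flor selection lemma to the monotone set $\gr C$ and the (weakly) continuous field $-\Id$ on the ball $K_r:=a_0+r\ball$; this produces $x_r\in K_r$ with $\scal{y-x_r}{y^*+x_r}\ge 0$ for every $(y,y^*)\in\gr C$ with $y\in K_r$. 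Testing against $(a_0,a_0^*)$ and solving the resulting quadratic inequality gives a bound $\|x_r\|\le M$ independent of $r$. Passing to the limit along a subsequence $x_r\weakly\bar x$: for an arbitrary fixed $(y,y^*)\in\gr C$ and $r$ large enough that $y\in K_r$, the inequality reads $\|x_r\|^2\le\scal{y}{y^*}+\scal{y}{x_r}-\scal{x_r}{y^*}$, and weak lower semicontinuity of $\|\cdot\|^2$ gives $\scal{y-\bar x}{y^*+\bar x}\ge\|\bar x\|^2\ge 0$. Thus $(\bar x,-\bar x)$ is monotonically related to $\gr C$, and maximality of $C$ forces $-\bar x\in C\bar x$.

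\emph{Main obstacle.} Everything outside the surjectivity step is bookkeeping with the monotonicity inequality and Fact~\ref{f:firm}; the genuine difficulty is producing the point $\bar x$. For a general monotone operator there is no functional to minimize, so a fixed-point result is unavoidable, and I would take the Debrunner--Flor lemma (equivalently Brouwer's theorem via a KKM argument) as a black box. The two care-points within that step are the uniform bound $\|x_r\|\le M$, which legitimizes the exhaustion by balls, and the weak-limit passage, which works only because the sign in the Debrunner--Flor inequality cooperates with weak lower semicontinuity of the norm. In a paper the efficient route is of course to cite Minty/Eckstein--Bertsekas for the surjectivity, exactly as the excerpt does.
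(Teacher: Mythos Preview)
The paper does not prove this statement; it is recorded as a \emph{Fact} with references to Minty, Zeidler, and Eckstein--Bertsekas, and is used as a black box thereafter. Your proposal therefore goes well beyond what the paper does: you supply an actual proof. The algebraic parts (single-valuedness and firm nonexpansiveness of $J_A$ from monotonicity, and maximality from $\ran(\Id+A)=X$) are correct and cleanly organized, and the surjectivity argument via the Debrunner--Flor lemma on weakly compact balls with the weakly continuous field $-\Id$ is a legitimate route to Minty's theorem in Hilbert space.

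One small slip in the weak-limit passage: from $\|x_r\|^2\le\scal{y}{y^*}+\scal{y}{x_r}-\scal{x_r}{y^*}$ and weak lower semicontinuity of $\|\cdot\|^2$ you obtain $\|\bar x\|^2\le\scal{y}{y^*}+\scal{y}{\bar x}-\scal{\bar x}{y^*}$, which rearranges to $\scal{y-\bar x}{y^*+\bar x}\ge 0$, not $\ge\|\bar x\|^2$ as you wrote. The conclusion you need ($\ge 0$) is unaffected, so the argument stands. In short, your proof is sound and self-contained, whereas the paper simply cites the result.
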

For $T:X\rightarrow X$, let $\Fix T$ denote its fixed point set
$\Fix T: =\{x\in X:\ Tx=x\}$.
Facts~\ref{f:firm}, \ref{mintymono} allow us to summarize the relationship among $\NON, \JJ, \MM$.
\begin{proposition}\label{p:relation}
\begin{enumerate}
\item $\NON=\{R_{A}: A\in \MM\},$ $$\MM=\left\{\bigg(\frac{T+\Id}{2}\bigg)^{-1}-\Id:\ T\in \NON\right\}.$$
\item $\JJ=\{J_{A}:\  A\in \MM\},$
$$\MM=\left\{T^{-1}-\Id:\ T\in\JJ\right\}.$$
\item $\NON=\{2T-\Id: \ T\in \JJ\},$
$$\JJ=\bigg\{\frac{T+\Id}{2}:\ T\in \NON\bigg\}.$$
\item Let $A\in \MM$. Then $\Fix R_{A}=\Fix J_{A} =A^{-1}(0).$
\end{enumerate}
\end{proposition}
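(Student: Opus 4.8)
The plan is to prove (iii) and (ii) first---these carry whatever content there is---and then read off (i) by composing the two correspondences and (iv) by a one-line fixed-point computation; note that $\NON$, $\JJ$, and all the resolvents in play are everywhere-defined single-valued self-maps of $X$, so no domain subtleties arise. For (iii), the equivalence of items (i) and (ii) in Fact~\ref{f:firm} says precisely that $T\in\JJ$ if and only if $2T-\Id\in\NON$; hence $T\mapsto 2T-\Id$ sends $\JJ$ into $\NON$ and $S\mapsto (S+\Id)/2$ sends $\NON$ into $\JJ$, and these maps are mutually inverse since $2\big((S+\Id)/2\big)-\Id=S$ and $\big((2T-\Id)+\Id\big)/2=T$. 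Both displayed identities of (iii) follow.

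For (ii), one direction is Fact~\ref{mintymono}: if $A\in\MM$ then $J_A$ is firmly nonexpansive with full domain, so $J_A\in\JJ$; and from $J_A=(A+\Id)^{-1}$ one gets $J_A^{-1}=A+\Id$ as relations, hence $A=J_A^{-1}-\Id$. This already shows $\{J_A:A\in\MM\}\subseteq\JJ$ and $\MM\subseteq\{T^{-1}-\Id:T\in\JJ\}$. For the reverse, fix $T\in\JJ$ and set $A:=T^{-1}-\Id$, so that $A+\Id=T^{-1}$ and therefore $J_A=(A+\Id)^{-1}=T$.

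The one step that genuinely needs an argument is that this $A$ is monotone. Given $(x,u),(y,v)\in\gr A$, put $p:=u+x$ and $q:=v+y$; then $u\in Ax$ means $Tp=x$ and likewise $Tq=y$, so $x-y=Tp-Tq$ and $u-v=(p-q)-(Tp-Tq)=(\Id-T)p-(\Id-T)q$, whence $\scal{x-y}{u-v}=\scal{Tp-Tq}{(\Id-T)p-(\Id-T)q}\geq 0$ by item (iv) of Fact~\ref{f:firm}. Thus $A$ is monotone, and since $J_A=T$ is firmly nonexpansive with full domain, Fact~\ref{mintymono} upgrades this to maximal monotonicity; so $A\in\MM$ with $J_A=T$, giving $\JJ\subseteq\{J_A:A\in\MM\}$ and $\{T^{-1}-\Id:T\in\JJ\}\subseteq\MM$. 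This finishes (ii).

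Now (i) is formal: since $R_A=2J_A-\Id$ by definition, (ii) and (iii) give $\{R_A:A\in\MM\}=\{2T-\Id:T\in\JJ\}=\NON$; for the second identity, $(R_A+\Id)/2=J_A$ yields $\big((R_A+\Id)/2\big)^{-1}-\Id=J_A^{-1}-\Id=A$ when $A\in\MM$, while for $S\in\NON$ we have $(S+\Id)/2\in\JJ$ by (iii), so $\big((S+\Id)/2\big)^{-1}-\Id\in\MM$ by (ii). Finally, for (iv): $J_A$ is single-valued, and $x=J_Ax$ is equivalent to $x\in(A+\Id)x$, i.e.\ to $0\in Ax$, so $\Fix J_A=A^{-1}(0)$; moreover $R_Ax=x\Leftrightarrow 2J_Ax-x=x\Leftrightarrow J_Ax=x$, so $\Fix R_A=\Fix J_A$. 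I expect the monotonicity verification in (ii) to be the only real step; everything else is bookkeeping with the resolvent identities together with Facts~\ref{f:firm} and~\ref{mintymono}.
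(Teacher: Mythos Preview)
Your proof is correct. The paper does not actually write out a proof of this proposition; it simply states, just before the proposition, that ``Facts~\ref{f:firm}, \ref{mintymono} allow us to summarize the relationship among $\NON, \JJ, \MM$,'' and leaves the verification to the reader. Your argument is exactly the intended one: (iii) from the equivalence (i)$\Leftrightarrow$(ii) in Fact~\ref{f:firm}, (ii) from Fact~\ref{mintymono} together with the monotonicity check via Fact~\ref{f:firm}(iv), and then (i) and (iv) as formal consequences.
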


Many nice properties and applications about $\NON, \JJ, \MM$ can be found in \cite{attouch,BC2011,bmw12,zaslavski00,zaslavski01}
and they
continue to flourish. We refer readers to \cite{bmw12} for a systematic relationship among these three spaces.
Let us now turn to the graphical convergence of set-valued maximal monotone operstors.

\begin{definition}\emph{\cite[page 360]{attouch}} Given a sequence of maximally monotone
operators $$\{A_{n}:\ n\in\NN\}, A.$$ The sequence $\{A_{n}:\ n\in\NN\}$ is said to be graphically
convergent to $A$, written as
$A_{n}\Gc A$, if
\begin{quote}
for every $(x,y)\in \gr A$ there exists $(x_{n},y_{n})\in \gr A_{n}$ such that
$x_{n}\rightarrow x, y_{n}\rightarrow y$ strongly in $X\times X$.
\end{quote}
In terms of set convergence $\gr A \subset \liminf \gr A_{n}$.
\end{definition}

\begin{proposition}\label{p:graphical}
The following are equivalent
\begin{enumerate}
\item\label{i:seqmono} A sequence of maximally monotone operators $(A_{k})_{k=1}^{\infty}$ in $\MM$ converges
graphically to $A$;
\item\label{i:seqresol} $(J_{A_k})_{k=1}^{\infty}$ converges pointwise to $J_{A}$ on $X$;
\item\label{i:seqcaley} $(R_{A_{k}})_{k=1}^{\infty}$ converges pointwise to $R_{A}$ on $X$.
\end{enumerate}
\end{proposition}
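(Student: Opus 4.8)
The plan is to route everything through Fact~\ref{mintymono}: since each $A_{k}$ and $A$ is maximally monotone, every resolvent $J_{A_{k}}$ is a single-valued \emph{nonexpansive} self-map of $X$ with full domain, and the defining identity $J_{A_{k}}=(\Id+A_{k})^{-1}$ gives, for $u,w\in X$, the elementary equivalence $(u,w)\in\gr A_{k}\iff u=J_{A_{k}}(u+w)$. Together with the relations $R_{A_{k}}=2J_{A_{k}}-\Id$ and $R_{A}=2J_{A}-\Id$, these are the only ingredients I expect to need. I would first prove the chain ``(ii)$\Rightarrow$(i)$\Rightarrow$(ii)'' to obtain the first equivalence, and then settle ``(ii)$\Leftrightarrow$(iii)'' separately.

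For ``(ii)$\Rightarrow$(i)'': given $(x,y)\in\gr A$, I would set $z:=x+y$, so that $x=J_{A}z$, and then define $x_{k}:=J_{A_{k}}z$ and $y_{k}:=z-x_{k}$. The equivalence above shows $(x_{k},y_{k})\in\gr A_{k}$, while pointwise convergence of the resolvents yields $x_{k}=J_{A_{k}}z\to J_{A}z=x$ and $y_{k}=z-x_{k}\to z-x=y$. Hence $\gr A\subseteq\liminf\gr A_{k}$, i.e.\ $A_{k}\Gc A$.

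For ``(i)$\Rightarrow$(ii)'': fixing $z\in X$, I would put $x:=J_{A}z$, so that $(x,z-x)\in\gr A$, and use graphical convergence to choose $(x_{k},v_{k})\in\gr A_{k}$ with $x_{k}\to x$ and $v_{k}\to z-x$. Since $x_{k}=J_{A_{k}}(x_{k}+v_{k})$ and $J_{A_{k}}$ is nonexpansive,
\[
\|J_{A_{k}}z-J_{A}z\|\le\|J_{A_{k}}z-J_{A_{k}}(x_{k}+v_{k})\|+\|x_{k}-x\|\le\|z-(x_{k}+v_{k})\|+\|x_{k}-x\|,
\]
and the right-hand side tends to $0$ because $x_{k}+v_{k}\to x+(z-x)=z$; thus $J_{A_{k}}z\to J_{A}z$. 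Finally, ``(ii)$\Leftrightarrow$(iii)'' is immediate, since $R_{A_{k}}z-R_{A}z=2\,(J_{A_{k}}z-J_{A}z)$ for every $z\in X$, so one sequence converges pointwise exactly when the other does.

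I do not expect a genuine obstacle here; the only points deserving care are that Fact~\ref{mintymono} must be invoked to guarantee that each $J_{A_{k}}$ is everywhere defined and nonexpansive (without full domain the expression $J_{A_{k}}z$ need not make sense, and without nonexpansiveness the displayed estimate collapses), and that the passage between $\gr A_{k}$ and the resolvent is genuinely the one-line equivalence $(u,w)\in\gr A_{k}\iff u=J_{A_{k}}(u+w)$, which is used in both directions.
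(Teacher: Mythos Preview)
Your argument is correct. The paper, by contrast, does not prove \ref{i:seqmono}$\Leftrightarrow$\ref{i:seqresol} at all: it simply cites \cite[Proposition~3.60]{attouch} for that equivalence and then observes, exactly as you do, that \ref{i:seqresol}$\Leftrightarrow$\ref{i:seqcaley} is immediate from $R_{A}=2J_{A}-\Id$. Your route is therefore a genuine, self-contained replacement for the black-box citation: the key devices are the parametrization $(u,w)\in\gr A_{k}\iff u=J_{A_{k}}(u+w)$ (used in both directions) and the nonexpansiveness of $J_{A_{k}}$ from Fact~\ref{mintymono}, which together make both implications short. The benefit of your approach is that it stays entirely within the paper's framework and exposes why the resolvent linearizes graphical convergence; the paper's approach simply off-loads the work to a standard reference. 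Note also that your proof matches the paper's \emph{definition} of graphical convergence exactly (only $\gr A\subseteq\liminf\gr A_{k}$ is required), so no additional ``$\limsup$'' argument is needed.
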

\begin{proof}
\ref{i:seqmono}$\Leftrightarrow$ \ref{i:seqresol} follows from \cite[Proposition 3.60, pages 361-362]{attouch}.
\ref{i:seqresol}$\Leftrightarrow$ \ref{i:seqcaley} is obvious since $R_{A}=2J_{A}-\Id$.
\end{proof}

A set $S$ in a complete metric space $Y$ is called \emph{residual} if there is a sequence of dense and open sets
$O_{n}\subset Y$ such that $\bigcap_{n=1}^{\infty}O_{n}\subset S$; in this case we call $\bigcap_{n=1}^{\infty}O_{n}$ a
dense $G_{\delta}$ set. A classical theorem of Baire is
\begin{fact}[Baire Category Theorem]\emph{\cite[page 158]{royden}}\label{f:baire} Let $Y$ be a complete metric space and $\{O_{n}\}$ a countable
collection of dense open subsets of $Y$. Then $\bigcap_{n=1}^{\infty}O_{n}$ is dense in $Y$.
\end{fact}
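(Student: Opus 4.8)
The statement is the classical Baire Category Theorem, and the plan is to reproduce the standard nested–ball argument (the one underlying the cited reference to Royden). First I would reformulate density: it suffices to show that for every nonempty open set $W\subseteq Y$ the intersection $W\cap\bigcap_{n=1}^{\infty}O_{n}$ is nonempty. Since $O_{1}$ is open and dense, $W\cap O_{1}$ is nonempty and open, so I can choose a point $x_{1}\in Y$ and a radius $r_{1}\in\zeroun$ with $\overline{B}(x_{1},r_{1})\subseteq W\cap O_{1}$, where $\overline{B}(x,r)$ denotes the closed ball of radius $r$ centered at $x$.

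The core is an inductive construction. Suppose $x_{n}$ and $r_{n}$ have been chosen. Because $O_{n+1}$ is dense, it meets the open ball $B(x_{n},r_{n})$; because $O_{n+1}$ is open, $B(x_{n},r_{n})\cap O_{n+1}$ is open; hence I can pick $x_{n+1}$ and $r_{n+1}\in\left]0,r_{n}/2\right[$ with $\overline{B}(x_{n+1},r_{n+1})\subseteq B(x_{n},r_{n})\cap O_{n+1}$. This produces a nested decreasing sequence of closed balls with $r_{n}\le 2^{-(n-1)}r_{1}$, so $r_{n}\to 0$, and the centers $(x_{n})_{n\in\NN}$ form a Cauchy sequence. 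By completeness of $Y$ this sequence converges to some $x\in Y$.

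It then remains to verify membership. Fix $n$; for every $m\ge n$ one has $x_{m}\in\overline{B}(x_{n},r_{n})$, and since the ball is closed, the limit satisfies $x\in\overline{B}(x_{n},r_{n})\subseteq O_{n}$; moreover $x\in\overline{B}(x_{1},r_{1})\subseteq W$. Therefore $x\in W\cap\bigcap_{n=1}^{\infty}O_{n}$, and since $W$ was an arbitrary nonempty open set, $\bigcap_{n=1}^{\infty}O_{n}$ is dense in $Y$.

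I do not anticipate a genuine obstacle here: the only point requiring care is organizing the induction so that, simultaneously, the closed balls remain nested, each new ball is contained in the next $O_{n}$, and the radii shrink geometrically (which is what forces $(x_{n})$ to be Cauchy rather than merely nested). Once this bookkeeping is in place, completeness of $Y$ finishes the argument immediately.
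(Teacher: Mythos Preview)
Your argument is correct and is exactly the standard nested closed-ball proof of the Baire Category Theorem. Note that the paper does not supply its own proof of this statement: it is recorded as a Fact with a citation to Royden, so there is no in-paper proof to compare against; your write-up simply reproduces the classical argument underlying that citation.
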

The technique of Baire Category has been instrumental in studying fixed point of nonexpansive mappings; see, e.g., \cite{Blasi,Blasi79,Blasi76,reichcom,reichza01,reichmath00,zaslavski00,zaslavski01}.

The paper is organized as follows. In Section~\ref{s:generic} we give the main result.
In Section~\ref{s:weak} we introduce a class of weakly contractive mappings which contains contractive mappings, and show that
although it is dense, it is only a set of first category.

{\bf Notation.} For a set-valued mapping $A:X\To X$,
we write $\dom A :=\menge{x\in X}{Ax\neq\varnothing}$
and $\ran A := A(X) = \bigcup_{x\in X}Ax$ for the
\emph{domain} and \emph{range} of $A$, respectively.
$\bB_{r}(x)$ denotes the closed ball of radius $r$ centered at $x$.
$\NN$ stands for the set of natural numbers.

\section{Main results}\label{s:generic}
In this section, using Reich and Zaslavski's technique on super-regular mappings we establish
a generic property of super-regular mappings in complete subspaces of $(\NON,\rho)$. This allows us to show
that most resolvents are super-regular; most maximally monotone operators
have a super-regular reflected resolvent
and a unique zero.

We start with three complete metric spaces
which set up the stage for the Baire Category Theorem.

On $\NON$ we define a metric, for $T_{1}, T_{2}\in \NON$
\begin{equation}\label{e:metricnonexp} \rho(T_{1},T_{2}):=\sum_{n=1}^{\infty}\frac{1}{2^n}\frac{\|T_{1}-T_{2}\|_{n}}{1+\|T_{1}-T_{2}\|_{n}}
\end{equation}
where $\|T_{1}-T_{2}\|_{n}:=\sup_{\|x\|\leq n}\|T_{1}x-T_{2}x\|.$
The metric $\rho$ defines a topology of
pointwise convergence on $X$ and uniform convergence on bounded subsets of $X$.

\begin{proposition}\label{p:setup}
$(\NON,\rho)$ is a complete metric space.
\end{proposition}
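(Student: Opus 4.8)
The plan is to verify the two defining properties of a metric space — that $\rho$ is a metric, and that $(\NON,\rho)$ is complete — with the main work being completeness, since the metric axioms are essentially routine.

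First I would check that $\rho$ is well defined and is a metric. Each seminorm $\|T_1-T_2\|_n=\sup_{\|x\|\le n}\|T_1x-T_2x\|$ is finite: indeed, for $T_1,T_2\in\NON$ we have $\|T_1x-T_2x\|\le \|T_1x-T_10\|+\|T_10-T_20\|+\|T_20-T_2x\|\le 2\|x\|+\|T_10-T_20\|$, so the sup over $\|x\|\le n$ is bounded by $2n+\|T_10-T_20\|<\infty$. Since $t\mapsto t/(1+t)$ is nondecreasing on $\RP$ and bounded by $1$, the series in \eqref{e:metricnonexp} is dominated by $\sum_n 2^{-n}=1$, hence converges. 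Symmetry is clear. For the triangle inequality, the key observation is that $t\mapsto t/(1+t)$ is subadditive and nondecreasing on $\RP$, so from $\|T_1-T_3\|_n\le \|T_1-T_2\|_n+\|T_2-T_3\|_n$ one gets the corresponding inequality for each term $\frac{1}{2^n}\frac{\|T_1-T_3\|_n}{1+\|T_1-T_3\|_n}$, and summing gives $\rho(T_1,T_3)\le\rho(T_1,T_2)+\rho(T_2,T_3)$. Finally $\rho(T_1,T_2)=0$ forces $\|T_1-T_2\|_n=0$ for all $n$, hence $T_1x=T_2x$ for every $x$ (take $n\ge\|x\|$), i.e.\ $T_1=T_2$.

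The substantive step is completeness. Let $(T_k)_{k\in\NN}$ be a Cauchy sequence in $(\NON,\rho)$. I would first argue that for each fixed $n$, $(T_k)$ is Cauchy in the seminorm $\|\cdot\|_n$: given $\varepsilon>0$, since $\rho(T_j,T_k)<2^{-n}\varepsilon/(1+\varepsilon)$ for $j,k$ large forces $\frac{1}{2^n}\frac{\|T_j-T_k\|_n}{1+\|T_j-T_k\|_n}<2^{-n}\varepsilon/(1+\varepsilon)$, and since $t\mapsto t/(1+t)$ is increasing this yields $\|T_j-T_k\|_n<\varepsilon$. Consequently, for each $x\in X$, picking $n\ge\|x\|$ shows $(T_kx)_k$ is Cauchy in $X$; by completeness of $X$ define $Tx:=\lim_k T_kx$. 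The convergence is uniform on each ball $\bB_n(0)$ because the $\|\cdot\|_n$-Cauchy estimate is uniform in $x$ with $\|x\|\le n$, passing to the limit in $j$. It remains to check $T\in\NON$: for any $x,y\in X$, $\|Tx-Ty\|=\lim_k\|T_kx-T_ky\|\le\limsup_k\|x-y\|=\|x-y\|$, so $T$ is nonexpansive. Finally $\rho(T_k,T)\to 0$: given $\varepsilon>0$ choose $N$ with $\sum_{n>N}2^{-n}<\varepsilon/2$, then use the uniform convergence $\|T_k-T\|_n\to 0$ for each $n\le N$ to make $\sum_{n\le N}2^{-n}\frac{\|T_k-T\|_n}{1+\|T_k-T\|_n}<\varepsilon/2$ for $k$ large.

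I do not anticipate a genuine obstacle here; the only point requiring a little care is the interchange of limits needed to pass from "Cauchy in each seminorm" to "uniformly convergent on each ball," and then to conclude $\rho$-convergence of the whole series by splitting it into a finite head (controlled by uniform convergence) and an exponentially small tail (controlled by the $2^{-n}$ weights). This tail-splitting trick is exactly what makes the weighted-seminorm construction work, so I would present it explicitly.
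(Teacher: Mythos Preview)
Your proposal is correct and follows essentially the same approach as the paper: establish that a $\rho$-Cauchy sequence is Cauchy in each seminorm $\|\cdot\|_n$, obtain a pointwise (hence nonexpansive) limit $T$ with $\|T_k-T\|_n\to 0$ for each $n$, and then deduce $\rho(T_k,T)\to 0$ by the head/tail splitting of the series. The paper is terser about the metric axioms (citing a reference) and the Cauchy-in-seminorm step, but the substance is identical.
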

\begin{proof}
It is easy to see that $\rho$ is a metric (cf. \cite[pages 10-11]{kreyzig}). We show that $\NON$ is complete.
Assume that $(T_{k})_{k=1}^{\infty}$ is a Cauchy sequence in $(\NON, \rho)$.
Then for every $n\in \NN$,
$(T_{k})_{k=1}^{\infty}$
is a uniform Cauchy sequence on $\bB_{n}(0)$. In particular, $(T_{k}(x))_{k=1}^{\infty}$ is
Cauchy in $X$ for each $x\in \bB_{n}(0)$, so $T_k(x)$ converges to $Tx\in X$. Moreover, for every
$n\in\NN$, $\|T_{k}-T\|_{n}\rightarrow 0$ as $k\rightarrow\infty$.
Since each $T_{k}$ is nonexpansive,
$T$ is nonexpansive, i.e., $T\in \NON$. It remains to show $\rho(T_{k},T)\rightarrow 0$ as $k\rightarrow\infty$.
Let $\varepsilon>0$. Choose $M\in\NN$ large such that
$$\sum_{n=M+1}^{\infty}
\frac{1}{2^{n}}<\frac{\varepsilon}{2}.$$
For this $M$, choose a large $N\in \NN$ such that $\|T_{k}-T\|_{M}<\frac{\varepsilon}{2}$ when
$k>N$.
Then for $k>N$ we have
\begin{align}
\rho(T_{k},T) &=\sum_{n=1}^{M}\frac{1}{2^n}\frac{\|T_{k}-T\|_{n}}{1+\|T_{k}-T\|_{n}}
+\sum_{n=M+1}^{\infty}\frac{1}{2^n}\frac{\|T_{k}-T\|_{n}}{1+\|T_{k}-T\|_{n}}\\
&\leq \sum_{n=1}^{M}\frac{1}{2^n}\frac{\varepsilon/2}{1+\varepsilon/2}+
\sum_{n=M+1}^{\infty}\frac{1}{2^n}<\varepsilon/2+\varepsilon/2=\varepsilon.
\end{align}
Hence $(\NON,\rho)$ is complete.
\end{proof}
\begin{remark} In \cite{reichmath00}, Reich and Zaslavaski define a unform space
$(\NON, {\cal U})$ where the uniformity ${\cal U}$ is defined by
the base
$$E(n,\varepsilon)=\{(T,S)\in\NON\times \NON:\ \|T-S\|_{n}<\varepsilon\}$$
for $n\in\NN, \varepsilon>0$.
The topology induced by this uniformity and the metric $\rho$ are exactly the same.
\end{remark}

On $\MM$ let us define a metric
\begin{equation}\label{e:metricmono}
\tro(A,B):=\rho(R_{A}, R_{B})=\sum_{n=1}^{\infty}\frac{1}{2^n}\frac{\|R_{A}-R_{B}\|_{n}}{1+\|R_{A}-R_{B}\|_{n}}
\end{equation}
for $A, B\in \MM$.
\begin{proposition}\label{p:mcomp}
\begin{enumerate}
\item The space of monotone operators $(\MM,\tro)$ is a complete metric space, and it is isometric
to $(\NON,\rho)$.
\item When $X=\RR^{N}$, the topology on $(\MM,\tro)$ is precisely the topology of
graphical convergence.
\end{enumerate}
\end{proposition}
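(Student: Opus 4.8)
The plan is to prove the two parts of Proposition~\ref{p:mcomp} in turn, leaning on the already-established Proposition~\ref{p:setup} (completeness of $(\NON,\rho)$) and Proposition~\ref{p:relation}\,(i) (the bijection $A\mapsto R_A$ between $\MM$ and $\NON$).

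\textbf{Part (i).} First I would verify that $\tro$ is genuinely a metric on $\MM$. By Proposition~\ref{p:relation}\,(i), the map $\Phi\colon\MM\to\NON\colon A\mapsto R_A$ is a bijection (injectivity because $R_A$ determines $J_A=(R_A+\Id)/2$ and hence $A=J_A^{-1}-\Id$; surjectivity because every $T\in\NON$ is $R_A$ for $A=((T+\Id)/2)^{-1}-\Id\in\MM$). Since $\tro(A,B)=\rho(\Phi(A),\Phi(B))$ by definition~\eqref{e:metricmono}, the metric axioms for $\tro$ are inherited verbatim from those for $\rho$, and $\Phi$ is by construction an isometry. Completeness of $(\MM,\tro)$ is then immediate: an isometric copy of a complete metric space is complete. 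Concretely, if $(A_k)$ is $\tro$-Cauchy, then $(R_{A_k})$ is $\rho$-Cauchy, so by Proposition~\ref{p:setup} it converges in $\rho$ to some $T\in\NON$; writing $A:=\Phi^{-1}(T)=((T+\Id)/2)^{-1}-\Id\in\MM$ gives $\tro(A_k,A)=\rho(R_{A_k},T)\to 0$.

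\textbf{Part (ii).} Now take $X=\RR^N$. I must show that $\tro$-convergence coincides with graphical convergence. By Proposition~\ref{p:graphical}, $A_k\Gc A$ iff $R_{A_k}\to R_A$ pointwise on $X$, so it suffices to show that on $\NON$, pointwise convergence and $\rho$-convergence agree. One direction is trivial from the form of $\rho$: if $\rho(T_k,T)\to0$ then $\|T_k-T\|_n\to0$ for every $n$, hence $T_kx\to Tx$ for every $x$. For the converse I would use the standard equicontinuity argument available because all maps are nonexpansive: suppose $T_kx\to Tx$ for all $x\in\RR^N$; fix $n$ and $\varepsilon>0$; cover the compact ball $\bB_n(0)$ by finitely many balls $\bB_{\varepsilon/3}(x_i)$, $i=1,\dots,m$ (here compactness of closed bounded sets in $\RR^N$ is essential); pick $K$ with $\|T_kx_i-Tx_i\|<\varepsilon/3$ for all $i$ and all $k\ge K$; then for any $x$ with $\|x\|\le n$, choosing $x_i$ with $\|x-x_i\|<\varepsilon/3$ and using nonexpansiveness of $T_k$ and $T$ gives $\|T_kx-Tx\|\le\|T_kx-T_kx_i\|+\|T_kx_i-Tx_i\|+\|Tx_i-Tx\|<\varepsilon$, so $\|T_k-T\|_n\le\varepsilon$ for $k\ge K$. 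Thus $\|T_k-T\|_n\to0$ for every $n$, and a standard tail estimate on the series~\eqref{e:metricnonexp} (split at a large $M$ exactly as in the proof of Proposition~\ref{p:setup}) yields $\rho(T_k,T)\to0$. Transporting through the isometry $\Phi$ and Proposition~\ref{p:graphical} then identifies $\tro$-convergence with graphical convergence.

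The only real obstacle is the converse direction in Part (ii): promoting pointwise convergence to uniform-on-bounded-sets convergence. This is where finite-dimensionality enters decisively through the compactness of $\bB_n(0)$; the equicontinuity (indeed $1$-Lipschitz uniformity) of the family $\{T_k\}\cup\{T\}$ makes it an Arzel\`a--Ascoli-type argument, but the statement genuinely fails in infinite dimensions, which is why the hypothesis $X=\RR^N$ is imposed. Everything else is bookkeeping: checking $\Phi$ is a bijection (Proposition~\ref{p:relation}), transporting the metric, and reusing the series tail estimate from Proposition~\ref{p:setup}.
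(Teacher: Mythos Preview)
Your proof is correct and follows essentially the same route as the paper: both establish that $A\mapsto R_A$ is a bijection making $(\MM,\tro)$ isometric to $(\NON,\rho)$, deduce completeness from Proposition~\ref{p:setup}, and for part~(ii) reduce via Proposition~\ref{p:graphical} to the coincidence of pointwise and uniform-on-bounded-sets convergence for nonexpansive maps on $\RR^N$. The paper simply asserts this last coincidence, whereas you spell out the $\varepsilon/3$ equicontinuity argument; that is additional detail rather than a different idea.
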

\begin{proof}
(i) By Facts~\ref{f:firm}, \ref{mintymono}, under the mapping $A\mapsto R_{A}$
$$(\MM,\tro)\text{ and } (\NON,\rho) \text{ are isometric}.$$
Since $(\NON,\rho)$ is complete by Proposition~\ref{p:setup}, we conclude that
$(\MM,\tro)$ is complete.

(ii) When $X=\RR^N$, on $\NON$ pointwise convergence and uniform convergence on compact
subsets are the same. By Proposition~\ref{p:graphical}, we obtain that the topology on $(\MM,\tro)$ is exactly
the topology of graphical convergence.
\end{proof}

On $\JJ$ let us define a metric
\begin{equation}\label{e:metricresol}
\hro(T_{1},T_{2}):=\rho(2T_{1}-\Id, 2T_{2}-\Id)=\sum_{n=1}^{\infty}\frac{1}{2^n}\frac{\|2T_{1}-2T_{2}\|_{n}}{1+\|2T_{1}-2T_{2}\|_{n}}
\end{equation}
for $T_{1},T_{2}\in \JJ$.
\begin{proposition} The space of resolvents
$(\JJ,\hro)$ is a complete metric space, and it is isometric to $(\NON, \rho)$.
\end{proposition}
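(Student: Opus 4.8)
The plan is to exhibit an explicit isometric bijection between $(\JJ,\hro)$ and $(\NON,\rho)$ and then transfer completeness along it, exactly as in the proof of Proposition~\ref{p:mcomp}(i). So first I would define $\Phi\colon\JJ\to\NON\colon T\mapsto 2T-\Id$. By the equivalence (1)$\Leftrightarrow$(2) in Fact~\ref{f:firm}, $\Phi$ is well defined, i.e.\ $2T-\Id\in\NON$ whenever $T\in\JJ$. Next I would check that $\Phi$ is a bijection: the natural candidate for its inverse is $\Psi\colon\NON\to\JJ\colon S\mapsto\tfrac{1}{2}(S+\Id)$, which does land in $\JJ$ by Proposition~\ref{p:relation}(iii), and one verifies $\Psi\circ\Phi=\Id_{\JJ}$ and $\Phi\circ\Psi=\Id_{\NON}$ by direct substitution.

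Since $(2T_1-\Id)-(2T_2-\Id)=2(T_1-T_2)$, the defining formula \eqref{e:metricresol} reads $\hro(T_1,T_2)=\rho\big(\Phi(T_1),\Phi(T_2)\big)$ for all $T_1,T_2\in\JJ$; in particular $\hro$ inherits from $\rho$ all the metric axioms (nonnegativity, symmetry, the triangle inequality, and $\hro(T_1,T_2)=0\Rightarrow\Phi(T_1)=\Phi(T_2)\Rightarrow T_1=T_2$ using injectivity of $\Phi$), so $(\JJ,\hro)$ is a metric space and $\Phi$ is by construction an isometry onto $(\NON,\rho)$. For completeness, given a $\hro$-Cauchy sequence $(T_k)$ in $\JJ$, the image sequence $\big(\Phi(T_k)\big)=(2T_k-\Id)$ is $\rho$-Cauchy in $\NON$, hence $\rho$-converges to some $S\in\NON$ by Proposition~\ref{p:setup}; then $T_k=\Psi\big(\Phi(T_k)\big)$ satisfies $\hro(T_k,\Psi(S))=\rho\big(\Phi(T_k),S\big)\to 0$, and $\Psi(S)\in\JJ$, so $(\JJ,\hro)$ is complete.

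There is essentially no hard step here; the only points requiring a little care are that $\Phi$ genuinely maps \emph{onto} all of $\NON$ (this is where Fact~\ref{f:firm} and Proposition~\ref{p:relation}(iii) do the work), and that the cancellation $(2T_1-\Id)-(2T_2-\Id)=2(T_1-T_2)$ makes \eqref{e:metricresol} literally the pullback of $\rho$ along $\Phi$, which is what makes the isometry and the completeness transfer immediate.
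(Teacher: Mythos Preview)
Your proposal is correct and follows exactly the paper's approach: the paper's proof is simply the terse version, invoking Fact~\ref{f:firm} to assert that $T\mapsto 2T-\Id$ is an isometry from $(\JJ,\hro)$ onto $(\NON,\rho)$ and then citing Proposition~\ref{p:setup} for completeness. You have merely unpacked the bijectivity check and the completeness transfer explicitly, which is fine.
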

\begin{proof}
By Fact~\ref{f:firm}, under the mapping $T\mapsto 2T-\Id$
\begin{equation}\label{e:isometry}
(\JJ,\hro)\text{ and } (\NON,\rho) \text{ are isometric}.
\end{equation}
Since $(\NON,\rho)$ is complete by Proposition~\ref{p:setup}, the result holds.
\end{proof}

Next we study the denseness of contraction mappings and strongly monotone operators, which are required
in later proofs.
\begin{definition} The map
 $T\in\NON$ is called a contraction with modulus $1>l\geq 0$ if
$$\|Tx-Ty\|\leq l\|x-y\|\  \forall \ x,y\in X.$$
\end{definition}
\begin{lemma}\label{l:cdense}
\begin{enumerate}
\item ({\bf denseness of contraction mappings}) In $(\NON,\rho)$ the set of contractions is dense, i.e., for very $\varepsilon>0$ and
$T\in\NON$ there exists a contraction $T_{1}\in \NON$ such that
$\rho(T,T_{1})<\varepsilon$.
\item ({\bf denseness of contractive firmly nonexpansive mappings}) In $(\JJ,\hro)$ the set of contraction is dense, i.e.,
for very $\varepsilon>0$ and
$T\in\JJ$ there exists a contraction $T_{1}\in \JJ$ such that
$\hro(T,T_{1})<\varepsilon$.
\end{enumerate}
\end{lemma}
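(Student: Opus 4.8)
The plan is to prove the two density statements in parallel, since the second will follow from the first by the isometry in \eqref{e:isometry}. For part (i), fix $T\in\NON$ and $\varepsilon>0$. The natural candidate is the damped map $T_1 := (1-\lambda)T_0 + \lambda\, T$ for a well-chosen anchor $T_0$ and a small $\lambda\in\zeroun$; the cleanest choice is to take $T_0$ a constant map, say $T_0\equiv T(0)$, and set
\begin{equation}\label{e:contrcand}
T_1 := \lambda T(0) + (1-\lambda)T,\qquad\text{i.e.,}\qquad T_1 x = (1-\lambda)Tx + \lambda T(0).
\end{equation}
First I would check $T_1\in\NON$ is in fact a contraction: for any $x,y\in X$,
\begin{equation}
\|T_1 x - T_1 y\| = (1-\lambda)\|Tx - Ty\| \leq (1-\lambda)\|x-y\|,
\end{equation}
so $T_1$ is a contraction with modulus $l = 1-\lambda < 1$.

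Next I would estimate $\rho(T,T_1)$. On the ball $\bB_n(0)$ we have $\|T_1 x - Tx\| = \lambda\|T(0)-Tx\| \le \lambda\|x\| \le \lambda n$ (using nonexpansiveness of $T$), hence $\|T-T_1\|_n \le \lambda n$. Plugging into the definition \eqref{e:metricnonexp} and using $\tfrac{t}{1+t}\le t$,
\begin{equation}
\rho(T,T_1) \leq \sum_{n=1}^{\infty}\frac{1}{2^n}\,\lambda n = \lambda\sum_{n=1}^{\infty}\frac{n}{2^n} = 2\lambda,
\end{equation}
so choosing $\lambda < \varepsilon/2$ gives $\rho(T,T_1)<\varepsilon$. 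This settles (i).

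For part (ii), given $T\in\JJ$ and $\varepsilon>0$, apply (i) to the nonexpansive map $S := 2T-\Id$ to obtain a contraction $S_1\in\NON$ with $\rho(S,S_1)<\varepsilon$; then set $T_1 := \tfrac{S_1+\Id}{2}\in\JJ$ by Proposition~\ref{p:relation}(iii). One checks $T_1$ is a contraction: $\|T_1 x - T_1 y\| = \tfrac12\|S_1 x - S_1 y\| \le \tfrac{l+1}{2}\|x-y\|$ where $l<1$ is the modulus of $S_1$, and $\tfrac{l+1}{2}<1$. Finally $\hro(T,T_1) = \rho(2T_1-\Id, 2T-\Id) = \rho(S_1,S)<\varepsilon$ by the definition \eqref{e:metricresol}. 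I do not foresee a genuine obstacle here; the only mild subtlety is making sure the damping is done toward a point (a constant map) rather than toward some fixed point of $T$, which need not exist — anchoring at the constant $T(0)$ sidesteps this and keeps all the estimates elementary. One could equally anchor at any fixed $x_0$ by using $T_1 x = (1-\lambda)Tx + \lambda x_0$, with the bound $\|T-T_1\|_n \le \lambda(n + \|T(0)-x_0\| + n)$ on $\bB_n(0)$, still summable against $2^{-n}$.
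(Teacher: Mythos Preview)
Your argument is correct and follows essentially the same damping idea as the paper (the paper takes $T_1=(1-\lambda)T$, anchoring at $0$, while you anchor at $T(0)$); your estimate $\rho(T,T_1)\le\sum_n \lambda n/2^n=2\lambda$ is in fact tidier than the paper's tail-splitting bound. One slip in part~(ii): the equality $\|T_1x-T_1y\|=\tfrac12\|S_1x-S_1y\|$ is false, since $T_1=\tfrac12(S_1+\Id)$ also carries the identity; the correct line is
\[
\|T_1x-T_1y\|\le\tfrac12\|S_1x-S_1y\|+\tfrac12\|x-y\|\le\tfrac{l+1}{2}\|x-y\|,
\]
which still yields the modulus $\tfrac{l+1}{2}<1$ you state. (The paper bypasses the isometry in (ii) altogether, simply noting that $(1-\lambda)T\in\JJ$ whenever $T\in\JJ$ and $0\le\lambda\le1$.)
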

\begin{proof} (i) Let $T\in \NON$ and $1>\varepsilon>0$.
Choose an integer $M$ sufficiently large such that
\begin{equation}\label{e:M}
\sum_{n= M+1}^{\infty}\frac{1}{2^{n}}\leq \frac{\varepsilon}{2}.
\end{equation}
Choose
$$0<\lambda <\frac{\varepsilon}{2(1+\|T\|_{M})}<\frac{1}{2}$$
and define
$$T_{1}:=(1-\lambda)T.$$
Then $T_{1}$ is a contraction with modulus $1/2<1-\lambda<1$.
As
\begin{align}
\|T_{1}-T\|_{M} & =\sup_{\|x\|\leq M}\|(1-\lambda)Tx-Tx\|\\
&=\lambda\sup_{\|x\|\leq M}\|Tx\|=\lambda\|T\|_{M}<\frac{\varepsilon}{2}.
\end{align}
Using $\displaystyle \|T_{1}-T\|_{n}\leq \|T_{1}-T\|_{M}<\frac{\varepsilon}{2}$ when $n\leq M$
and \eqref{e:M}, we have
\begin{align}
\rho(T_{1},T)&=\sum_{n=1}^{M}\frac{1}{2^n}\frac{\|T_{1}-T\|_{n}}{1+\|T_{1}-T\|_{n}}
+\sum_{n=M+1}^{\infty}\frac{1}{2^n}\frac{\|T_{1}-T\|_{n}}{1+\|T_{1}-T\|_{n}}\\
&\leq \sum_{n=1}^{M}\frac{1}{2^n}\frac{\varepsilon}{2}+\sum_{n=M+1}^{\infty}\frac{1}{2^{n}}\\
&< \frac{\varepsilon}{2}+\frac{\varepsilon}{2}=\varepsilon
\end{align}
so $\rho(T, T_{1})<\varepsilon$.

(ii) The proof is similar as in (i) by replacing $\rho$ by $\hro$ and by observing that
$T_{1}=(1-\lambda) T\in\JJ$ if $T\in \JJ$ and $0\leq \lambda \leq 1$.
\end{proof}

To study monotone operators, we need:

\begin{fact}\emph{\cite[Corollary 4.7]{bmw12}}\label{f:monocontr}
Let $A:X\To X$ be maximally monotone. Then the following are equivalent:
\begin{enumerate}
\item Both $A$ and $A^{-1}$ are strongly monotone;
\item There exists $\varepsilon>0$ such that both $(1+\varepsilon)J_{A}$ and $(1+\varepsilon)J_{A^{-1}}$
are firmly nonexpansive;
\item $R_{A}$ is a Banach contraction.
\end{enumerate}
\end{fact}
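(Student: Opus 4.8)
The plan is to establish (i)$\Leftrightarrow$(ii) through a pointwise dictionary between strong monotonicity and firm nonexpansiveness of a rescaled resolvent, and then (ii)$\Leftrightarrow$(iii) by reducing both conditions to three scalars attached to an arbitrary pair of points. The dictionary is: for maximally monotone $A$ and $\varepsilon>0$, the map $(1+\varepsilon)J_{A}$ is firmly nonexpansive iff $A-\varepsilon\Id$ is monotone. Indeed, applying Fact~\ref{f:firm}(iii) to $(1+\varepsilon)J_{A}$ and dividing by $1+\varepsilon>0$, firm nonexpansiveness of $(1+\varepsilon)J_{A}$ is equivalent to $\scal{(x-J_{A}x)-(y-J_{A}y)}{J_{A}x-J_{A}y}\ge\varepsilon\|J_{A}x-J_{A}y\|^{2}$ for all $x,y\in X$; writing $p=J_{A}x$ and $q=J_{A}y$, Fact~\ref{mintymono} gives $x-p\in Ap$, $y-q\in Aq$, and as $x$ runs over $X$ the pair $(J_{A}x,\,x-J_{A}x)$ runs over all of $\gr A$, so this is exactly $\scal{p-q}{u-v}\ge\varepsilon\|p-q\|^{2}$ for all $(p,u),(q,v)\in\gr A$. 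Applying the same to $A^{-1}$ (also maximally monotone, with $J_{A^{-1}}=\Id-J_{A}$), condition (ii) says exactly that for some $\varepsilon>0$ both $A$ and $A^{-1}$ are $\varepsilon$-strongly monotone; since strong monotonicity with a given modulus persists for every smaller positive one, this is equivalent to (i) (take $\varepsilon$ the minimum of the two moduli).

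For (ii)$\Leftrightarrow$(iii) I would first reduce to scalars. Fix $x\neq y$, put $d=x-y$, $p=J_{A}x-J_{A}y$, and let $c=\|d\|^{2}>0$, $s=\|p\|^{2}$, $t=\scal{p}{d}$; firm nonexpansiveness of $J_{A}$ gives $0\le s\le t$. From $J_{A^{-1}}=\Id-J_{A}$ one gets $J_{A^{-1}}x-J_{A^{-1}}y=d-p$, hence $\|J_{A^{-1}}x-J_{A^{-1}}y\|^{2}=c-2t+s$ and $\scal{J_{A^{-1}}x-J_{A^{-1}}y}{d}=c-t$, both $\le c$ by nonexpansiveness; also $R_{A}x-R_{A}y=2p-d$, so $\|R_{A}x-R_{A}y\|^{2}=4s-4t+c$. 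By the dictionary, (ii) is equivalent to the existence of $\varepsilon>0$ with $(1+\varepsilon)s\le t$ and $(1+\varepsilon)(c-2t+s)\le c-t$ for all $x,y$, and (iii) to the existence of $k\in[0,1)$ with $4s-4t+c\le k^{2}c$ for all $x,y$.

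The two implications are then short. For (ii)$\Rightarrow$(iii): discarding the term $s\ge0$ in the second relation gives $t\ge\varepsilon c/(1+2\varepsilon)$, while the first gives $s\le t/(1+\varepsilon)$; hence $t-s\ge\tfrac{\varepsilon}{1+\varepsilon}\,t\ge\delta c$ with $\delta:=\varepsilon^{2}/\bigl((1+\varepsilon)(1+2\varepsilon)\bigr)>0$, so $\|R_{A}x-R_{A}y\|^{2}=c-4(t-s)\le(1-4\delta)c$; since the left side is nonnegative, $1-4\delta\in[0,1)$ and $R_{A}$ is a contraction with modulus $\sqrt{1-4\delta}$. For (iii)$\Rightarrow$(ii): the relation rearranges to $t-s\ge\tfrac{1-k^{2}}{4}\,c$, and putting $\varepsilon:=\tfrac{1-k^{2}}{4}>0$ and using $s\le c$ and $c-2t+s\le c$ gives $t-s\ge\varepsilon s$ and $t-s\ge\varepsilon(c-2t+s)$, i.e.\ the two inequalities characterizing (ii).

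I do not expect a genuine obstacle: once the two reformulations are in place the rest is elementary Hilbert-space algebra. The only real care is the direction of the estimates in the last step — which quantity to bound from above, which from below, and when it is legitimate to drop the nonnegative term $s$ — together with noticing that it is precisely the ``$A^{-1}$ strongly monotone'' half, entering through $J_{A^{-1}}=\Id-J_{A}$, that forces the reflected resolvent $R_{A}$ rather than $J_{A}$ to be the contraction; for instance, for a $90^{\circ}$ rotation $A$ on $\RR^{2}$ the resolvent $J_{A}$ is already a Banach contraction, yet neither $A$ nor $A^{-1}$ is strongly monotone and $R_{A}$ is an isometry.
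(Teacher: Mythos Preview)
Your argument is correct. The dictionary between $\varepsilon$-strong monotonicity of $A$ and firm nonexpansiveness of $(1+\varepsilon)J_{A}$ is exactly right, and the scalar reduction to $(c,s,t)$ is a clean way to handle the equivalence with contractivity of $R_{A}$. One small point: in the step (ii)$\Rightarrow$(iii) you conclude ``since the left side is nonnegative, $1-4\delta\in[0,1)$'' --- this is fine, but it is worth noting explicitly that the existence of some $x\neq y$ (so $c>0$) is what forces $1-4\delta\ge 0$; alternatively one can observe that a common $\varepsilon$ for both $A$ and $A^{-1}$ is automatically $\le 1$, which gives $\delta\le 1/6$ directly.

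As for comparison with the paper: there is nothing to compare, because the paper does not prove this statement --- it is recorded as a Fact with a citation to \cite[Corollary~4.7]{bmw12}. Your argument is therefore a genuine addition rather than a different route to the same place. The paper does, however, prove a closely related one-sided version later (Proposition~\ref{p:strongmono}), where the equivalence ``$A$ strongly monotone $\Leftrightarrow$ $(1+\varepsilon)J_{A}$ firmly nonexpansive $\Leftrightarrow$ $R_{A}$ weakly contractive'' is obtained by expanding $\|\varepsilon(x-y)+(1+\varepsilon)(R_{A}x-R_{A}y)\|^{2}\le\|x-y\|^{2}$. Your three-scalar bookkeeping is essentially the same computation organized differently, with the advantage that it treats the $A$ and $A^{-1}$ halves symmetrically and makes transparent why \emph{both} are needed to force $R_{A}$ (rather than $J_{A}$) to be a Banach contraction --- a point your rotation example illustrates nicely.
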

\begin{lemma}[denseness of strongly monotone mappings]
In $(\MM,\tro)$ the set of monotone operators $A$ such that both $A$ and $A^{-1}$ are strongly
 monotone is dense, i.e., for very $\varepsilon>0$ and
$A\in\MM$ there exists a $B\in \MM$ such that both $B$ and $B^{-1}$ are strongly monotone, and
$\tro(A,B)<\varepsilon$. Consequently, the set of strongly monotone
operators is dense in $\MM$.
\end{lemma}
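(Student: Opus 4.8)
The plan is to transport the problem to $(\NON,\rho)$ via the isometry $A\mapsto R_A$ of Proposition~\ref{p:mcomp}, and then invoke the denseness of contractions already established in Lemma~\ref{l:cdense}(i). Fix $A\in\MM$ and $\varepsilon>0$. Since $R_A\in\NON$ and $\tro(A,B)=\rho(R_A,R_B)$ for every $B\in\MM$, it suffices to produce $B\in\MM$ whose reflected resolvent $R_B$ is a Banach contraction and satisfies $\rho(R_A,R_B)<\varepsilon$; indeed, by Fact~\ref{f:monocontr} the contractivity of $R_B$ is exactly equivalent to both $B$ and $B^{-1}$ being strongly monotone.

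First I would apply Lemma~\ref{l:cdense}(i) to $T:=R_A$, which yields a contraction $S:=(1-\lambda)R_A\in\NON$ with modulus $1-\lambda<1$ and $\rho(R_A,S)<\varepsilon$. Next, because $S$ is nonexpansive, Fact~\ref{f:firm} shows that $(S+\Id)/2$ is firmly nonexpansive with full domain, so Fact~\ref{mintymono} (equivalently Proposition~\ref{p:relation}(i)) provides a $B\in\MM$ with $J_B=(S+\Id)/2$, i.e.\ $R_B=S$. Then $R_B$ is a Banach contraction, so Fact~\ref{f:monocontr} gives that both $B$ and $B^{-1}$ are strongly monotone, while $\tro(A,B)=\rho(R_A,R_B)=\rho(R_A,S)<\varepsilon$. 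This establishes the first assertion.

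The concluding statement is then immediate: every $B$ produced above is in particular strongly monotone, so the set of strongly monotone operators contains a dense subset of $(\MM,\tro)$ and is therefore itself dense. I do not expect any genuine obstacle here — all of the analytic content is carried by Lemma~\ref{l:cdense} and by the characterization Fact~\ref{f:monocontr}; the only point requiring a moment's care is checking that the perturbed map $(1-\lambda)R_A$ is again a reflected resolvent, which holds because it is nonexpansive and $\NON=\{R_A:A\in\MM\}$.
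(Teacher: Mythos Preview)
Your proof is correct and follows essentially the same route as the paper: transport to $(\NON,\rho)$ via the isometry $A\mapsto R_A$, use Lemma~\ref{l:cdense}(i) to approximate $R_A$ by a contraction, pull it back to some $B\in\MM$ via Proposition~\ref{p:relation}(i), and invoke Fact~\ref{f:monocontr}. The only difference is that you spell out the intermediate step (that the contraction is again a reflected resolvent) slightly more explicitly than the paper does.
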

\begin{proof}
Under the mapping $A\mapsto R_{A}$
$$(\MM,\tro)\text{ and } (\NON,\rho) \text{ are isometric}.$$
Let $A\in\MM$ and $\varepsilon>0$. By Lemma~\ref{l:cdense}(i), for $R_{A}$ there exists a contraction $T_{1}$ such that
$\rho(R_{A},T_{1})<\varepsilon$. Proposition~\ref{p:relation}(i) says that there exists
$B\in\MM$ such that $T_{1}=R_{B}$. By Fact~\ref{f:monocontr} both $B, B^{-1}$ are strongly monotone.
The proof is complete by using $\tro(A,B)=\rho(R_{A},R_{B})$.
\end{proof}

To prove our main results, we require \emph{super-regular mappings} introduced by Reich and Zaslavaski \cite{reichmath00}.
\begin{definition}[Reich-Zaslavski] A mapping $T:X\to X$ is called super-regular if there exists a
unique $x_{T}\in X$ such that
for each $s>0$, when $n\rightarrow\infty$,
$$T^{n}x\rightarrow x_{T}\quad \text{ uniformly on } \bB_{s}(0).$$
\end{definition}

Our next two results collect some elementary properties of super-regular mappings.
\begin{proposition}\label{p:superfix} Assume that $T:X\to X$ is super-regular and continuous. Then
$\Fix{T}$ is a singleton.
\end{proposition}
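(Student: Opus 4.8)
The plan is to show that $\Fix T$ is nonempty (it contains $x_T$) and that it cannot contain a second point.

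First I would verify that $x_T\in\Fix T$. Fix any starting point $x\in X$, say $x=0$. By super-regularity, $T^n 0\to x_T$ (in particular pointwise). Applying $T$, which is continuous, gives $T(T^n 0)=T^{n+1}0\to T x_T$. But $T^{n+1}0\to x_T$ as well, since it is a tail of the same convergent sequence. By uniqueness of limits in $X$, $Tx_T=x_T$, so $x_T\in\Fix T$ and hence $\Fix T\neq\varnothing$.

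Next I would show uniqueness. Suppose $y\in\Fix T$, i.e., $Ty=y$. Then $T^n y=y$ for all $n$. Pick $s>0$ with $\|y\|\leq s$, so that $y\in\bB_s(0)$. By the defining property of super-regularity, $T^n x\to x_T$ uniformly on $\bB_s(0)$; in particular $T^n y\to x_T$. But $T^n y=y$ for every $n$, so the constant sequence $(y)_n$ converges to $x_T$, forcing $y=x_T$. Therefore $\Fix T=\{x_T\}$ is a singleton.

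I do not anticipate a genuine obstacle here: the argument is essentially a two-line consequence of the definition of super-regularity together with continuity of $T$ (used only to pass $T$ through the limit in the existence part). The only point requiring a moment's care is recognizing that a fixed point is automatically contained in \emph{every} closed ball large enough to contain it, so the uniform convergence on $\bB_s(0)$ applies to it; and that one should invoke continuity of $T$ for the existence claim, since super-regularity by itself does not immediately say the limit $x_T$ is fixed.
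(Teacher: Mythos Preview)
Your proposal is correct and follows essentially the same approach as the paper: first use continuity of $T$ together with $T^{n}x\to x_T$ to conclude $Tx_T=x_T$, then observe that any fixed point $y$ satisfies $T^n y=y\to x_T$, forcing $y=x_T$. The only differences are cosmetic: the paper uses an arbitrary $x$ rather than $x=0$, and does not spell out the step of choosing $s$ with $\|y\|\leq s$ (which you rightly note is needed to invoke the definition of super-regularity).
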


\begin{proof} Let $x\in X$.
Using the continuity and super-regularity of $T$, we have
$$x_{T}=\lim_{n\rightarrow\infty}T^{n}x=\lim_{n\rightarrow\infty}T(T^{n-1}x)=Tx_{T}$$
so $x_{T}\in\Fix{T}$. Let $x\in\Fix T$. By the super-regularity of $T$ and $T^{n}x=x$,
$x=\lim_{n\rightarrow\infty}T^n x=x_{T}.$ Hence $\Fix T=\{x_{T}\}$.
\end{proof}

\begin{proposition}\label{p:contract}
 \begin{enumerate}
 \item If $T\in \NON$ is a contraction, then
$T$ is super-regular.
\item If $A\in \MM$ has both $A$ and $A^{-1}$ being strongly monotone, then $R_{A}$ and
$J_{A}$ are super-regular.
\end{enumerate}
\end{proposition}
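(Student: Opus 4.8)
The plan is to prove both parts by reducing everything to the elementary fact that a Banach contraction is super-regular, then transporting this through the isometries and characterizations already established.

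For part (i): let $T\in\NON$ be a contraction with modulus $l\in[0,1)$. By the Banach contraction principle $T$ has a unique fixed point $x_T$, so the candidate limit point is forced. Fix $s>0$ and $x\in\bB_s(0)$. The standard estimate $\|T^n x-x_T\|=\|T^n x-T^n x_T\|\le l^n\|x-x_T\|\le l^n(s+\|x_T\|)$ gives a bound independent of $x$ on $\bB_s(0)$, so $T^n x\to x_T$ uniformly on $\bB_s(0)$. Hence $T$ is super-regular with $x_T$ the unique point required by the definition; the uniqueness clause in the definition is satisfied because any such point must in particular be a limit of $T^n x_T = x_T$, forcing it to equal $x_T$.

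For part (ii): suppose $A\in\MM$ with both $A$ and $A^{-1}$ strongly monotone. By Fact~\ref{f:monocontr} (equivalence (i)$\Leftrightarrow$(iii)), $R_A$ is a Banach contraction, so by part (i) $R_A$ is super-regular. It remains to handle $J_A=\tfrac{1}{2}(R_A+\Id)$. One route is to observe that $J_A$ is also a Banach contraction: if $R_A$ has modulus $l<1$, then for all $x,y$, $\|J_A x-J_A y\|=\tfrac12\|R_A x-R_A y+(x-y)\|\le \tfrac12(l+1)\|x-y\|$, and $\tfrac{l+1}{2}<1$; then apply part (i) again. Alternatively one can argue directly that $R_A$ and $J_A$ share the same fixed point (Proposition~\ref{p:relation}(iv) gives $\Fix R_A=\Fix J_A=A^{-1}(0)$, a singleton here since $R_A$ is a contraction) and that iterating $J_A$ converges uniformly on balls; but the contraction-modulus computation is the cleanest.

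I do not expect a serious obstacle here: both parts are essentially immediate consequences of Fact~\ref{f:monocontr} and the contraction mapping theorem, with only routine estimates. The one point that needs a word of care is verifying the \emph{uniqueness} half of the super-regularity definition rather than just the convergence half — but as noted this is automatic, since $x_T$ appears as the limit of the constant sequence $T^n x_T$. A secondary point worth stating explicitly is that $J_A$ indeed maps into $X$ with full domain and is continuous (it is firmly nonexpansive by Fact~\ref{mintymono}), so Proposition~\ref{p:superfix} applies and the "unique $x_T$" is genuinely its fixed point; this makes the statement consistent with part (i).
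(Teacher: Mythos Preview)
Your proof is correct and follows essentially the same strategy as the paper: reduce everything to ``Banach contraction $\Rightarrow$ super-regular,'' then invoke Fact~\ref{f:monocontr} for part~(ii). There are only minor differences in execution. For part~(i) the paper uses the a~priori error estimate $\|T^{n}x-x_{T}\|\leq \frac{l^{n}}{1-l}\|x-Tx\|$ and then bounds $\|x-Tx\|$ on $\bB_{s}(0)$, whereas your direct estimate $\|T^{n}x-x_{T}\|=\|T^{n}x-T^{n}x_{T}\|\leq l^{n}(s+\|x_{T}\|)$ is a bit cleaner. For part~(ii) the paper handles $J_{A}$ by citing \cite{rockprox} for the fact that strong monotonicity of $A$ makes $J_{A}$ a contraction, while you derive the contraction constant $\tfrac{1+l}{2}$ for $J_{A}$ directly from $J_{A}=\tfrac12(R_{A}+\Id)$; your route is self-contained and avoids the external reference. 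Your remarks on the uniqueness clause in the definition of super-regularity are correct and worth keeping, though the paper leaves this implicit.
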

\begin{proof} (i) Let $s>0$. Let $T$ be a contraction with modulus $0\leq l<1$. By the Banach Contraction Principle
\cite[pages 300-302]{kreyzig},
$T$ has a unique fixed point $x_{T}$, and with arbitrary $x\in X$ the error estimate is
$$\|T^{n}x-x_{T}\|\leq \frac{l^{n}}{1-l}\|x-Tx\|.$$
For every $x\in \bB_{s}(0)$,
$$\|T^{n}x-x_{T}\|\leq \frac{l^{n}}{1-l}(\|x\|+\|Tx-T0\|+\|T0\|)\leq \frac{l^{n}}{1-l}(s+ls+\|T0\|) .$$
Therefore,
$$\|T^{n}-x_{T}\|_{s}\leq \frac{l^{n}}{1-l}(s+ls+\|T0\|)\rightarrow 0\quad \text{ when $n\rightarrow\infty$}.$$
Since $s>0$ was arbitrary, $T$ is super-regular.

(ii) By Fact~\ref{f:monocontr}, $R_{A}$ is a contraction. Since $A$ is strongly monotone, $J_{A}$ is
a contraction \cite{rockprox}. Hence (i) applies.
\end{proof}

The proof ideas to Proposition~\ref{p:keyresult} and Theorem~\ref{t:unified} below
 are due to Reich and Zaslaski \cite{reichmath00,reichza01}. We adopt them to our complete
 metric space setting, and to subspaces of $\NON$.
For two metrics $\rho, d$ on $\FF\subset \NON$, if $\rho(T_{1},T)\leq d(T_{1},T)$ for all $T_{1},T\in\FF$
we write $\rho\leq d$.

\begin{proposition}\label{p:keyresult} Assume that $\FF \subseteq\NON$, $(\FF,d)$ is complete
and $d\geq \rho$.
Let $T\in \FF$ be super-regular and $\varepsilon, s$ be positive numbers. Then there exists
$\delta>0$ and $n_{0}\geq 2$ such that when $d(T_{1},T)<\delta$
and $n\geq n_{0}$ we have
\begin{equation}\label{e:iterates}
\|T_{1}^{n}x-x_{T}\|< \varepsilon\quad \text{ for every } x\in \bB_{s}(0),
\end{equation}
i.e., $\|T_{1}^{n}-x_{T}\|_{s}<\varepsilon.$
\end{proposition}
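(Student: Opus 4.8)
The plan is to exploit the super-regularity of $T$ to pin down a large iterate $T^{m}$ that already contracts $\bB_{s}(0)$ into a tiny ball around $x_{T}$, and then to propagate a small perturbation through finitely many compositions. First I would fix $\varepsilon>0$ and $s>0$ and, using that $T$ is super-regular, choose $m\in\NN$ so large that $\|T^{m}x-x_{T}\|<\varepsilon/4$ for every $x\in\bB_{s}(0)$; by nonexpansiveness this automatically gives $\|T^{m}x-x_{T}\|<\varepsilon/4$ uniformly, and in particular $T^{m}(\bB_{s}(0))\subseteq\bB_{\varepsilon/4}(x_{T})\subseteq\bB_{r}(0)$ for $r:=\|x_{T}\|+\varepsilon/4$. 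I would also record that $x_{T}=Tx_{T}$ by Proposition~\ref{p:superfix}, and that $\|x_{T}\|\le s$ may be assumed (enlarging $s$ if necessary), so $\bB_{s}(0)$ is $T$-``almost invariant'' in the relevant sense.

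The key estimate is a perturbation lemma for compositions: if $T_{1}\in\FF$ and $T_{1},T$ agree to within $\eta$ on $\bB_{R}(0)$ for a suitably large radius $R$, then $\|T_{1}^{k}x-T^{k}x\|\le k\eta$ for all $x\in\bB_{s}(0)$ and all $k\le m$, provided all the intermediate iterates stay inside $\bB_{R}(0)$. This is the standard telescoping argument: write $T_{1}^{k}x-T^{k}x = (T_{1}(T_{1}^{k-1}x)-T(T_{1}^{k-1}x)) + (T(T_{1}^{k-1}x)-T(T^{k-1}x))$, bound the first term by $\eta$ (since $T_{1}^{k-1}x\in\bB_{R}(0)$) and the second by $\|T_{1}^{k-1}x-T^{k-1}x\|$ using nonexpansiveness of $T$, and induct. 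Since the orbits under $T$ starting in $\bB_{s}(0)$ are bounded (by nonexpansiveness they stay in $\bB_{s+\|Tx_T-x_T\|\cdot 0}$... more carefully, $\|T^{k}x\|\le\|T^{k}x-x_{T}\|+\|x_{T}\|\le\|x-x_{T}\|+\|x_{T}\|\le 2s$), I can take $R:=2s+1$; then choosing $\eta$ small enough that $m\eta<\min\{\varepsilon/4,1\}$ keeps every $T_{1}^{k}x$ within distance $1$ of $T^{k}x$, hence inside $\bB_{R}(0)$, so the induction is self-consistent. This yields $\|T_{1}^{m}x-x_{T}\|\le\|T_{1}^{m}x-T^{m}x\|+\|T^{m}x-x_{T}\|<\varepsilon/4+\varepsilon/4=\varepsilon/2$ for all $x\in\bB_{s}(0)$, i.e. $T_{1}^{m}(\bB_{s}(0))\subseteq\bB_{\varepsilon/2}(x_{T})\subseteq\bB_{s}(0)$ (after possibly shrinking to ensure $\varepsilon/2\le$ something, or just noting $\bB_{\varepsilon/2}(x_T)\subseteq\bB_{s}(0)$ when $s$ is large).

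Finally I would bootstrap from one good block of length $m$ to all large $n$. Since $T_{1}$ is nonexpansive, once $T_{1}^{m}x\in\bB_{\varepsilon/2}(x_{T})$ we want later iterates to stay close; but $x_{T}$ need not be a fixed point of $T_{1}$. The cleaner route is to apply the perturbation lemma on the ball $\bB_{\varepsilon/2}(x_{T})\subseteq\bB_{r'}(0)$ as well: choosing $\eta$ even smaller guarantees that applying $T_{1}$ up to $m$ more times to a point of $\bB_{\varepsilon/2}(x_{T})$ lands within $\varepsilon$ of the corresponding $T$-iterate, which by super-regularity of $T$ (uniform convergence on $\bB_{\varepsilon/2}(x_T)$) is within $\varepsilon/2$ of $x_{T}$. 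Iterating this block argument, writing $n=qm+r$ with $0\le r<m$, I get $\|T_{1}^{n}x-x_{T}\|<\varepsilon$ for all $n\ge n_{0}:=2m$ and all $x\in\bB_{s}(0)$, once $\delta>0$ is chosen so small that $d(T_{1},T)<\delta$ forces $\|T_{1}-T\|_{R}<\eta$ (possible because $d\ge\rho$ and $\rho$ controls $\|\cdot\|_{R}$ via the single term $2^{-R}\|T_1-T\|_R/(1+\|T_1-T\|_R)$). The main obstacle is organizing the bookkeeping so that the perturbation radius $R$, the block length $m$, and the tolerance $\eta$ are chosen in the right order — $R$ must be fixed before $\eta$, and $\eta$ must dominate $m\eta$ staying below all the $\varepsilon$-fractions simultaneously on both $\bB_{s}(0)$ and the shrunk ball — but no single estimate is hard; it is the nested choice of constants, in the spirit of Reich–Zaslavski, that requires care.
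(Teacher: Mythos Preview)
Your proposal is correct and is essentially the paper's argument: choose a large radius (the paper's $M$, your $R$), use super-regularity to fix a block length $n_{0}$, establish the telescoping perturbation estimate $\|T_{1}^{k}x-T^{k}x\|\le k\eta$ for $k\le n_{0}$ on the large ball (the paper's Claim~1), and then bootstrap to all $n\ge n_{0}$. The only tactical difference is in the bootstrap: you decompose $n=qm+r$ and treat the remainder separately, whereas the paper runs a sliding-window strong induction, writing $T_{1}^{k+1}y=T_{1}^{n_{0}}\big(T_{1}^{\,k+1-n_{0}}y\big)$ and reapplying Claim~1 to the shifted start point, which sidesteps the remainder case and the explicit enlargement of $s$.
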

\begin{proof} We may and do assume that $0<\varepsilon<1/2$. Let $x_{T}$ denote the
unique fixed point of $T$.
Choose an integer $M>1+2s+4\|x_{T}\|$ so that
\begin{equation}\label{e:parameter}
s<\frac{M}{2},\qquad \frac{1}{2}+s+2\|x_{T}\|<\frac{M}{2}.
\end{equation}
As $T$ is super-regular, there exists $n_{0}\geq 2$ such that
\begin{equation}\label{e:regular}
\|T^{n}x-x_{T}\|<\frac{\varepsilon}{8} \quad \text{ whenever $x\in \bB_{M}(0)$
and $n\geq n_{0}$.}
\end{equation}
Put
$$\delta:=\frac{1}{2^M}\bigg(\frac{(8n_{0})^{-1}\varepsilon}{1+(8n_{0})^{-1}\varepsilon}\bigg).$$
We will show that \eqref{e:iterates} holds when $d(T_{1},T)<\delta$ and $n\geq n_{0}$.

Let $d(T_{1},T)<\delta$. Then $\rho(T_{1},T)<\delta$. Using the definition of $\rho$ and that
$t\mapsto \frac{t}{1+t}$ is strictly increasing on $[0,+\infty)$, we have
\begin{equation}\label{e:neighbor}
\|T_{1}-T\|_{M}<(8n_{0})^{-1}\varepsilon.
\end{equation}

{\noindent \sl Claim 1.} Whenever $x\in \bB_{M/2}(0)$ and $1\leq n\leq n_{0}$,
\begin{align}
\|T_{1}^{n}x-T^{n}x\| & < n (8n_{0})^{-1}\varepsilon, \label{e:map}\\
\|T_{1}^{n}x\| & <\frac{1}{2}+\|x\|+2\|x_{T}\|<M. \label{e:mappower}
\end{align}
We prove this by induction.
As $T\in\NON$ and $Tx_{T}=x_{T}$, for every $n\in \NN$,
\begin{align}\label{e:tpowerdiff}
\|T_{1}^{n}x-T^{n}x\| & \leq \|T_{1}^{n}x-TT_{1}^{n-1}x\|+\|TT_{1}^{n-1}x-T^{n}x\|\\
&\leq \|T_{1}^{n}x-TT_{1}^{n-1}x\|+\|T_{1}^{n-1}x-T^{n-1}x\|,
\end{align}
and
\begin{align}\label{e:tpowerfix}
\|T_{1}^{n}x-x_{T}\| &\leq \|T_{1}^{n}x-T^{n}x\|+\|T^{n}x-x_{T}\|\\
&\leq \|T_{1}^{n}x-T^{n}x\|+\|x-x_{T}\|\\
&\leq \|T_{1}^{n}x-T^{n}x\|+\|x\|+\|x_{T}\|.
\end{align}
Now when $n=1$, \eqref{e:map} follows from \eqref{e:neighbor}; for \eqref{e:mappower},
by \eqref{e:tpowerfix} and \eqref{e:neighbor}
$$\|T_{1}x\|\leq \|T_{1}x-x_{T}\|+\|x_{T}\|\leq \|T_{1}x-Tx\|+\|x\|+2\|x_{T}\|<
\frac{1}{2}+\|x\|+2\|x_{T}\|.$$
Assume that \eqref{e:map}-\eqref{e:mappower} hold for $1\leq n<n_{0}$, i.e.,
\begin{align}
\|T_{1}^{n}x-T^{n}x\| & < n (8n_{0})^{-1}\varepsilon, \label{e:map1}\\
\|T_{1}^{n}x\| & <\frac{1}{2}+\|x\|+2\|x_{T}\|<M. \label{e:mappower1}
\end{align}
 Using \eqref{e:tpowerdiff} for $n+1$, \eqref{e:map1}, \eqref{e:neighbor},
$\|T_{1}^{n}x\|<M$ and $n<n_{0}$,  we obtain
\begin{align}\label{e:iterated}
\|T_{1}^{n+1}x-T^{n+1}x\| &\leq \|T_{1}^{n+1}x-TT_{1}^{n}x\|+\|T_{1}^{n}x-T^{n}x\|\\
&< (8n_{0})^{-1}\varepsilon +n(8n_{0})^{-1}\varepsilon=(n+1)(8n_{0})^{-1}\varepsilon.
\end{align}
Using \eqref{e:tpowerfix} for $n+1$, \eqref{e:iterated},
\begin{align}
\|T_{1}^{n+1}x\| &\leq \|T_{1}^{n+1}x-x_{T}\|+\|x_{T}\|\\
&\leq \|T_{1}^{n+1}x-T^{n+1}x\|+\|x\|+2\|x_{T}\|\\
&<(n+1)(8n_{0})^{-1}\varepsilon +\|x\|+2\|x_{T}\|<\frac{1}{2} +\|x\|+2\|x_{T}\|.
\end{align}
This establishes \eqref{e:map}-\eqref{e:mappower}.

{\noindent \sl Claim 2.}
\begin{equation}\label{e:t1power}
\|T_{1}^{n}y-x_{T}\|<\varepsilon\quad\ \text{ whenever $y\in\bB_{s}(0)$ and $n\geq n_{0}$}.
\end{equation}
This is done again by induction. When $n=n_{0}$, as $\|y\|\leq s<M/2$, by \eqref{e:regular}
and \eqref{e:map}
$$\|T_{1}^{n_{0}}y-x_{T}\|\leq \|T_{1}^{n_{0}}y-T^{n_{0}}y\|+\|T^{n_{0}}y-x_{T}\|
<\varepsilon/8+\varepsilon/8<\varepsilon.$$
Assume that \eqref{e:t1power} holds for all $n_{0}\leq n\leq k$. For $i=1,\ldots, n_{0}$,
\eqref{e:mappower} and \eqref{e:parameter} give
\begin{equation}\label{e:smallcase}
\|T_{1}^{i}y\|<1/2+\|y\|+2\|x_{T}\|<1/2+s+2\|x_{T}\|<M/2;
\end{equation}
For $k\geq i>n_{0}$, \eqref{e:t1power} and \eqref{e:parameter} give
\begin{equation}\label{e:biggercase}
\|T_{1}^{i}y\|\leq \|T_{1}^{i}y-x_{T}\|+\|x_{T}\|<1/2+\|x_{T}\|<M/2.
\end{equation}
Set $j=k+1-n_{0}$ and $x=T_{1}^{j}y$.
Then $1\leq j<k$ and $\|x\|<M/2$ by \eqref{e:smallcase} and \eqref{e:biggercase}.
Combining \eqref{e:regular}, \eqref{e:map} and \eqref{e:mappower} yields
\begin{align}
\|T_{1}^{k+1}y-x_{T}\| & = \|T_{1}^{n_{0}}x-x_{T}\|
\\
&\leq \|T_{1}^{n_{0}}x-T^{n_{0}}x\|+\|T^{n_{0}}x-x_{T}\|<\varepsilon/8+\varepsilon/8
<\varepsilon.
\end{align}
This completes the proof.
\end{proof}

Our first main result comes as follows.
\begin{theorem}[generic property of super-regular mappings in complete subspaces]\label{t:unified}
Let $(\FF,d)$ be a complete metric space, $\FF\subset \NON$ and $d\geq \rho$. Assume that the set of contraction mappings
$\CC$ is dense in $\FF$. Then there exists a set
$G\subset \FF$ which is a countable intersection of open everywhere dense set in $\FF$ such that
each $T\in G$ is super-regular. In particular, $\Fix(T)=(\Id-T)^{-1}(0)\neq\varnothing$ is a singleton.
\end{theorem}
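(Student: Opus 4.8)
The plan is to run a Baire category argument directly on $(\FF,d)$, using Proposition~\ref{p:keyresult} to manufacture a countable family of open dense subsets whose intersection is the desired set $G$.

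First I would fix, for every contraction $T\in\CC$ and every pair $(q,s)$ of positive integers, the number $\delta(T,q,s)>0$ and the integer $n_{0}(T,q,s)\ge 2$ furnished by Proposition~\ref{p:keyresult} applied with $\varepsilon=1/q$; this is legitimate because every $T\in\CC$ is super-regular by Proposition~\ref{p:contract}(i), and $(\FF,d)$ is complete with $d\ge\rho$. I would then define
$$U_{q,s}:=\bigcup_{T\in\CC}\{S\in\FF:\ d(S,T)<\delta(T,q,s)\},$$
which is open in $\FF$ and contains $\CC$, hence is dense in $\FF$ since $\CC$ is dense by hypothesis. Setting $G:=\bigcap_{q,s\in\NN}U_{q,s}$, the Baire Category Theorem (Fact~\ref{f:baire}) makes $G$ a dense $G_{\delta}$ subset of $\FF$.

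Next I would show that each $T_{1}\in G$ is super-regular. Fix $T_{1}\in G$. For every $(q,s)$, membership in $U_{q,s}$ yields a contraction $T=T(T_{1},q,s)\in\CC$ with $d(T_{1},T)<\delta(T,q,s)$, so Proposition~\ref{p:keyresult} gives
$$\|T_{1}^{\,n}x-x_{T}\|<\tfrac1q\qquad\text{for all }x\in\bB_{s}(0)\text{ and }n\ge n_{0}(T,q,s),$$
where $x_{T}$ is the fixed point of $T$. Fixing $s$, the triangle inequality then forces $\|x_{T(T_{1},q,s)}-x_{T(T_{1},q',s)}\|<1/q+1/q'$ for all $q,q'$, so the points $x_{T(T_{1},q,s)}$ ($q\in\NN$) form a Cauchy sequence with a limit $\xi_{s}$. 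Given $\varepsilon>0$, picking $q$ with $1/q<\varepsilon/2$ and $\|x_{T(T_{1},q,s)}-\xi_{s}\|<\varepsilon/2$ shows $\|T_{1}^{\,n}x-\xi_{s}\|<\varepsilon$ for all $x\in\bB_{s}(0)$ and all large $n$; thus $T_{1}^{\,n}x\to\xi_{s}$ uniformly on $\bB_{s}(0)$. Evaluating at $x=0$ shows $\xi_{s}$ is independent of $s$; call it $x_{T_{1}}$. Its uniqueness (demanded in the definition of super-regularity) is automatic, since any point with the same convergence property equals $\lim_{n}T_{1}^{\,n}0=x_{T_{1}}$. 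Hence $T_{1}$ is super-regular, and being a (continuous) element of $\NON$, Proposition~\ref{p:superfix} shows $\Fix(T_{1})=(\Id-T_{1})^{-1}(0)$ is a singleton.

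I expect the only real subtlety to be the extraction of the common limit $x_{T_{1}}$: the auxiliary contraction $T(T_{1},q,s)$ varies with both $q$ and $s$, so one must argue, as sketched, that its fixed points form a Cauchy sequence in $q$ and that the limit does not depend on $s$. Everything else is routine Baire bookkeeping — openness and density of the $U_{q,s}$, and assembling the uniform-convergence estimate — since the quantitative heart of the matter has already been done in Proposition~\ref{p:keyresult}.
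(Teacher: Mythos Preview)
Your proof is correct and follows essentially the same Baire-category approach as the paper, using Proposition~\ref{p:keyresult} to build open dense neighborhoods of contractions and intersecting them. The only cosmetic difference is that the paper ties the radius and the accuracy into a single integer index $i$ (taking $\varepsilon=1/i$, $s=i$, and $O_q:=\bigcup\{U(T,i):T\in\CC,\ i\ge q\}$), which automatically makes the limiting fixed point independent of the radius and thus sidesteps the extra ``$\xi_s$ does not depend on $s$'' step you flagged as a subtlety.
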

\begin{proof}
 By Proposition~\ref{p:keyresult} and Proposition~\ref{p:contract}(i),
for each $T\in \CC$, in $(\FF,d)$ there exists an open
neighborhood
$U(T,i)$ of $T$ and an integer $n(T,i)\geq 2$ such that whenever $T_{1}\in U(T,i)$, $n\geq n(T,i)$ and
$x\in\bB_{i}(0)$
\begin{equation}\label{e:ineighbor}
\|T_{1}^{n}x-x_{T}\|<\frac{1}{i}.
\end{equation}
Define $G:=\bigcap_{q=1}^{\infty}O_{q}$ where
$$O_{q}:=\bigcup\{U(T,i):\ T\in \CC, i=q, q+1, \ldots\}$$
which is dense and open in $\FF$, since $\CC\subset O_{q}$ and each $U(T,i)$ is open.

Let $T\in G$. Then there exists a sequence $(T_{q})_{q=1}^{\infty}$ and a
sequence $(i_{q})_{q=1}^{\infty}$ with $i_{q}\geq q$ such that $T\in U(T_{q}, i_{q})$ for $q=1,2,\ldots$.
Then for each $q$, by \eqref{e:ineighbor}, when $n\geq n(T_{q},i_{q})$ and $x\in \bB_{i_{q}}(0)$ we have
\begin{equation}\label{e:specialseq}
\|T^{n}x-x_{T_{q}}\|<\frac{1}{i_{q}}.
\end{equation}
It follows that when $n\geq \max\{n(T_{q},i_{q}), n(T_p,i_{p})\}$ and $\|x\|\leq\min\{i_{p}, i_{q}\}$,
$$\|x_{T_{q}}-x_{T_{p}}\|\leq \|x_{T_{q}}-T^{n}x\|+\|T^n x-x_{T_{p}}\|<\frac{1}{i_{q}}+\frac{1}{i_{p}},$$
thus  $(x_{T_{q}})_{q=1}^{\infty}$ is a Cauchy sequence with a limit $x_{T}\in X$.
Let $s>0$ and $\varepsilon>0$. Choose $i_q$ and $q$ sufficiently large such that $\bB_{s}(0)\subset \bB_{i_{q}}(0)$ and
$$\frac{1}{i_{q}}+\|x_{T_{q}}-x_{T}\|<\varepsilon.$$
In view of \eqref{e:specialseq}, for every
$x\in \bB_{s}(0)$ and $n\geq n(T_{q},i_q)$ we have
$$\|T^{n}x-x_{T}\|\leq \|T^{n}x-x_{T_{q}}\|+\|x_{T_{q}}-x_{T}\|<\frac{1}{i_{q}}+\|x_{T_{q}}-x_{T}\|<\varepsilon.$$
Hence $T$ is super-regular. The remaining result follows from Propostion~\ref{p:superfix}.
\end{proof}

Different choices of $\FF$ lead to:
\begin{theorem}[Reich \& Zaslavaski \cite{reichmath00}]\label{t:reichz}
 There exists a set $G\subset \NON$ which is a countable intersection of open
everywhere dense sets in $\NON$ such that each $T\in G$ is super-regular.
\end{theorem}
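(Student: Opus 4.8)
The plan is to read off this theorem as the special case $\FF=\NON$, $d=\rho$ of Theorem~\ref{t:unified}. So the only work is to verify that the four hypotheses of Theorem~\ref{t:unified} hold for this choice, after which the conclusion is immediate. First I would record that $(\NON,\rho)$ is a complete metric space by Proposition~\ref{p:setup}, so $(\FF,d)=(\NON,\rho)$ is complete. Next, the inclusion $\FF\subset\NON$ is the trivial inclusion $\NON\subset\NON$, and the metric comparison $d\ge\rho$ is the trivial equality $\rho=\rho$. Finally, the density of the contraction mappings $\CC$ in $(\NON,\rho)$ is exactly Lemma~\ref{l:cdense}(i). With all hypotheses in place, Theorem~\ref{t:unified} produces a set $G\subset\NON$ that is a countable intersection of open everywhere dense subsets of $\NON$ such that each $T\in G$ is super-regular; this is precisely the assertion (and, as a bonus, Proposition~\ref{p:superfix} gives that $\Fix T$ is a singleton for each such $T$, although that is not part of the present statement).

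Since all the substantive content — the quantitative stability estimate of Proposition~\ref{p:keyresult} and the Baire-category construction $G=\bigcap_q O_q$ with $O_q=\bigcup\{U(T,i):T\in\CC,\ i\ge q\}$ — is already carried out inside the proof of Theorem~\ref{t:unified}, there is no genuine obstacle here. The one point worth flagging is that Theorem~\ref{t:unified} was deliberately phrased for an arbitrary complete subspace $(\FF,d)$ with $\FF\subset\NON$ and $d\ge\rho$ rather than only for $\NON$ itself; the extra flexibility (a possibly finer metric $d$ on a possibly smaller $\FF$) is exactly what lets us recover the original Reich–Zaslavski setting by taking $\FF=\NON$ and $d=\rho$, just as it also yields the analogous statements for $(\JJ,\hro)$ and $(\MM,\tro)$. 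Hence the proof is a one-line specialization of the general result.
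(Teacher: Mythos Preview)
Your proposal is correct and matches the paper's own proof essentially line for line: the paper also invokes Lemma~\ref{l:cdense}(i) for the density of contractions and then applies Theorem~\ref{t:unified} to the complete metric space $(\NON,\rho)$. The only difference is that you spell out the trivial verifications $\FF\subset\NON$ and $d\ge\rho$ explicitly, which the paper leaves implicit.
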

\begin{proof}
By Lemma~\ref{l:cdense}(i), the set of contractions
$\CC\subset\NON$ is dense in $\NON$. Apply Theorem~\ref{t:unified} to the complete metric space
$(\NON,\rho)$.
\end{proof}

\begin{theorem}[super-regularity of resolvents]\label{t:resol}
 In $(\JJ,\hro)$, the set
$$\{T\in\JJ:\ T \text{ is super-regular}\}$$
is residual.
\end{theorem}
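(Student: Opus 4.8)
The plan is to read Theorem~\ref{t:resol} off from Theorem~\ref{t:unified} by taking $\FF=\JJ$, $d=\hro$, and $\CC$ the set of contractions in $\JJ$. Three hypotheses of Theorem~\ref{t:unified} have to be checked: that $(\JJ,\hro)$ is a complete metric space with $\JJ\subseteq\NON$, that $\hro\geq\rho$ on $\JJ$, and that $\CC$ is dense in $(\JJ,\hro)$. Completeness of $(\JJ,\hro)$ is the proposition immediately preceding this theorem. The inclusion $\JJ\subseteq\NON$ is Fact~\ref{f:firm}: if $T$ is firmly nonexpansive, then $\|Tx-Ty\|^{2}\leq\scal{x-y}{Tx-Ty}\leq\|x-y\|\,\|Tx-Ty\|$, so $\|Tx-Ty\|\leq\|x-y\|$. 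For $\hro\geq\rho$ note that, for every $n$, $\|2T_{1}-2T_{2}\|_{n}=2\|T_{1}-T_{2}\|_{n}\geq\|T_{1}-T_{2}\|_{n}$; since $t\mapsto t/(1+t)$ is increasing on $\RP$, a termwise comparison of \eqref{e:metricresol} with \eqref{e:metricnonexp} yields $\hro(T_{1},T_{2})\geq\rho(T_{1},T_{2})$ for all $T_{1},T_{2}\in\JJ$. Finally, denseness of $\CC$ in $(\JJ,\hro)$ is exactly Lemma~\ref{l:cdense}(ii).

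With these in hand, Theorem~\ref{t:unified} produces a set $G\subseteq\JJ$ that is a countable intersection of open everywhere dense subsets of $(\JJ,\hro)$ and such that every $T\in G$ is super-regular. Since $G\subseteq\{T\in\JJ:\ T\text{ is super-regular}\}$, the latter set contains a dense $G_{\delta}$ and is therefore residual, which is the assertion. (By Proposition~\ref{p:relation}(iv) one may also record the by-product that for $T=J_{A}\in G$ the fixed-point set $\Fix J_{A}=A^{-1}(0)$ is a singleton, but this is not needed for the statement as posed.)

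I do not expect a genuine obstacle here: the argument is the "different choice of $\FF$" template already used for Theorem~\ref{t:reichz}, and the only slightly non-automatic point is the domination $\hro\geq\rho$, which is the factor $2$ computation above. One caveat worth flagging is that the result does \emph{not} follow simply by transporting Theorem~\ref{t:reichz} through the isometry \eqref{e:isometry}: the map $T\mapsto 2T-\Id$ does not in general preserve super-regularity, because iterating $2T-\Id$ is unrelated to iterating $T$ (e.g.\ a resolvent may be super-regular while its Cayley transform is a non-trivial isometry). Hence the direct application of Theorem~\ref{t:unified} to $(\JJ,\hro)$ is the right route.
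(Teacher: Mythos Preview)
Your proof is correct and follows the paper's argument essentially verbatim: verify $\hro\geq\rho$ via the termwise comparison using monotonicity of $t\mapsto t/(1+t)$, invoke Lemma~\ref{l:cdense}(ii) for denseness of contractions, and apply Theorem~\ref{t:unified} to $(\JJ,\hro)$. Your additional remark that the isometry $T\mapsto 2T-\Id$ does not transport super-regularity is a nice clarification but not part of the paper's proof.
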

\begin{proof}
By Lemma~\ref{l:cdense}(ii), the set of contractions
$\CC\subset \JJ$ is dense in $\JJ$. Since
$$\hro(T_{1},T_{2})=\sum_{n=1}^{\infty}\frac{1}{2^n}\frac{2\|T_{1}-T_{2}\|_{n}}{1+2\|T_{1}-T_{2}\|_{n}}
\geq \sum_{n=1}^{\infty}\frac{1}{2^n}\frac{\|T_{1}-T_{2}\|_{n}}{1+\|T_{1}-T_{2}\|_{n}}=\rho(T_{1},T_{2})
\quad \forall \ T_{1}, T_{2}\in \JJ$$
by \eqref{e:metricnonexp} and \eqref{e:metricresol},
we have $\hro\geq\rho.$ It remains to apply Theorem~\ref{t:unified} to the complete metric space
$(\JJ,\hro)$.
\end{proof}

Finding zeros of maximally monotone operators
are important in optimization; see, e.g., \cite{BC2011,Comb04,Sabach11,lions,rockprox}.
However, we have

\begin{theorem}[unique zero of monotone operators]\label{t:mono}
In $(\MM,\tro)$ there is a set $G\subset \MM$ which is a countable intersection of open everywhere
dense sets in $\MM$ such that each $A\in G$ has
$R_{A}$ super-regular. In particular, $A^{-1}(0)\neq\varnothing$ is a singleton.
\end{theorem}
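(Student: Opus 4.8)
The plan is to deduce the statement from the generic super-regularity theorem for $\NON$ by transporting it through the canonical isometry $A\mapsto R_A$, and then to read off the conclusion about zeros from Propositions~\ref{p:superfix} and \ref{p:relation}(iv).

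First I would record that the reflected-resolvent map $\Phi\colon(\MM,\tro)\to(\NON,\rho)$, $A\mapsto R_A$, is a \emph{surjective} isometry: surjectivity is Proposition~\ref{p:relation}(i), and the isometry property is the very definition of $\tro$ in \eqref{e:metricmono}. Consequently $\Phi$ and $\Phi^{-1}$ send open sets to open sets and dense sets to dense sets, and they commute with countable intersections; hence $\Phi^{-1}$ carries a set that is a countable intersection of open everywhere dense subsets of $\NON$ to a set of the same type in $\MM$. Note that one cannot simply apply Theorem~\ref{t:unified} to $\MM$ directly, since $\MM$ is not literally a subset of $\NON$; the isometry is what bridges this gap.

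Next I would invoke the generic result on $\NON$: by Lemma~\ref{l:cdense}(i) the contractions are dense in $(\NON,\rho)$, so Theorem~\ref{t:unified} applied with $\FF=\NON$ and $d=\rho$ (this is exactly Theorem~\ref{t:reichz}) produces a set $G'\subseteq\NON$, a countable intersection of open everywhere dense subsets of $\NON$, every member of which is super-regular. Setting $G:=\Phi^{-1}(G')=\{A\in\MM:\ R_A\ \text{is super-regular}\}$, the previous paragraph shows that $G$ is a countable intersection of open everywhere dense subsets of $(\MM,\tro)$, as required.

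Finally, fix $A\in G$. Then $R_A$ is super-regular, and since $R_A\in\NON$ it is in particular continuous, so Proposition~\ref{p:superfix} yields that $\Fix R_A$ is a singleton; by Proposition~\ref{p:relation}(iv) this singleton equals $A^{-1}(0)$, which gives the final assertion. There is no substantive obstacle here — the only point needing (routine) care is verifying that $\Phi$ genuinely transfers the ``countable intersection of open dense sets'' property, which is immediate once one observes that $\Phi$ is a bijective isometry, hence a homeomorphism of $\MM$ onto $\NON$.
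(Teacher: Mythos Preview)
Your proof is correct and follows essentially the same route as the paper: transport Theorem~\ref{t:reichz} through the surjective isometry $A\mapsto R_A$ (the paper cites Proposition~\ref{p:mcomp} for this), then invoke Propositions~\ref{p:relation}(iv) and~\ref{p:superfix} to identify $A^{-1}(0)$ as the singleton $\Fix R_A$. One tiny slip: $\Phi^{-1}(G')$ is only contained in $\{A\in\MM:\ R_A\text{ is super-regular}\}$, not equal to it, but this does not affect the argument since you only need the forward inclusion.
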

\begin{proof} By
Proposition~\ref{p:mcomp}, $(\MM,\tro)$ is isometric to
$(\NON,\rho).$ Apply Theorem~\ref{t:reichz} to $(\NON,\rho)$ to obtain $\tilde{G}$ such that each $T\in \tilde{G}$
 is super-regular and $\tilde{G}$ is a countable intersection of
 open everywhere dense sets in $\NON$. This $\tilde{G}$ corresponds to $G$ in $(\MM,\tro)$ such that each
$A\in G$ has $R_{A}$ being super-regular and $G$ is an intersection of open everywhere dense set
in $\MM$. Note that $\Fix (R_{A})=A^{-1}(0)$
by Proposition~\ref{p:relation}(iv).
Since $\Fix (R_{A})$ is a singleton when $A\in G$ by Proposition~\ref{p:superfix},
the result holds.
\end{proof}

In this connection, see also \cite[Corollary 1]{Blasi79}, where De Blasi and Myjak showed a similar
generic
property for continuous and bounded monotone operators on a bounded set.
\begin{corollary}
In $(\MM,\tro)$ there exists a set $G\subset \MM$ which is a countable intersection of open everywhere
dense set in $\MM$ such that each $A\in G$ has
both $R_{A}$ and $J_{A}$ being super-regular. In particular, $A^{-1}(0)\neq\varnothing$ is a singleton.
\end{corollary}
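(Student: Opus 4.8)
The plan is to combine Theorem~\ref{t:mono} with the super-regularity of resolvents, noting that the obstruction is that Theorem~\ref{t:mono} only gives super-regularity of $R_A$, while Theorem~\ref{t:resol} lives in the space $(\JJ,\hro)$ rather than in $(\MM,\tro)$. So the right strategy is not to intersect two generic sets coming from different spaces, but to apply Theorem~\ref{t:unified} once more, this time to a subspace of $\NON$ engineered so that super-regularity of the point in that subspace forces super-regularity of \emph{both} $R_A$ and $J_A$ simultaneously.

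Concretely, first I would identify the relevant complete subspace. Under $A\mapsto R_A$ the space $(\MM,\tro)$ is isometric to $(\NON,\rho)$, and under $A\mapsto J_A$ it is isometric to $(\JJ,\hro)$; the resolvent and reflected resolvent are related by $R_A=2J_A-\Id$. The key observation is that if $R_A$ is a Banach contraction then, by Fact~\ref{f:monocontr}, both $A$ and $A^{-1}$ are strongly monotone, and then Proposition~\ref{p:contract}(ii) gives that \emph{both} $R_A$ and $J_A$ are super-regular. So I would take $\FF=\NON$ with $d=\rho$ and invoke Theorem~\ref{t:unified}: by Lemma~\ref{l:cdense}(i) the contractions are dense in $(\NON,\rho)$, so there is a residual set $\tilde G\subset\NON$ of super-regular maps, which transports to a residual set $G\subset\MM$ via the isometry $A\mapsto R_A$ of Proposition~\ref{p:mcomp}(i).

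The remaining point is to check that the generic set $G$ can be taken inside the set where $J_A$ is also super-regular. Here I would refine the construction inside the proof of Theorem~\ref{t:unified}: the residual set there is built from neighbourhoods $U(T,i)$ of \emph{contractions} $T\in\CC$, and for a contraction $R_B$ the associated operator $B$ has $B,B^{-1}$ strongly monotone, so $J_B$ is itself a contraction, hence super-regular. One then runs the Reich--Zaslavski argument of Proposition~\ref{p:keyresult} simultaneously for the family $\{R_B:B\in\CC\}$ and the family $\{J_B:B\in\CC\}$ (both are contractions in the respective complete metric spaces, and $\hro\ge\rho$), shrinking each $U(T,i)$ so that the iterate estimate \eqref{e:iterates} holds for $R_{B_1}$ \emph{and} $J_{B_1}$ at once. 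The resulting $G=\bigcap_q O_q$ is still a countable intersection of open dense subsets of $(\MM,\tro)$, and the Cauchy-sequence argument at the end of the proof of Theorem~\ref{t:unified}, applied to both $R_{B_q}$ and $J_{B_q}$, shows each $A\in G$ has $R_A$ and $J_A$ super-regular. Finally $\Fix R_A=A^{-1}(0)$ by Proposition~\ref{p:relation}(iv) and this is a singleton by Proposition~\ref{p:superfix}.

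The main obstacle is bookkeeping rather than substance: one must verify that running the estimate of Proposition~\ref{p:keyresult} for two different metrics ($\rho$ on $\NON$ and $\hro$ on $\JJ$) and two different continuous maps ($R$ and $J$) on the same parameter $A$ can be done with a \emph{common} $\delta$ and $n_0$, which is immediate since one just takes the minimum of the two $\delta$'s and the maximum of the two $n_0$'s and uses $\hro\ge\rho$. An alternative, cleaner route — which I would actually prefer to write up — avoids re-opening Theorem~\ref{t:unified}: observe that the set $\mathcal S:=\{A\in\MM: R_A\text{ is a Banach contraction}\}$ is dense in $(\MM,\tro)$ (Lemma on denseness of strongly monotone mappings together with Fact~\ref{f:monocontr}), and on $\mathcal S$ both $R_A$ and $J_A$ are super-regular contractions, so one can apply Theorem~\ref{t:unified} to $(\MM,\tro)$ with the dense family of contractions taken to be $\{R_A:A\in\mathcal S\}$, while simultaneously shrinking the neighbourhoods using the $J$-side estimate; the bound $\hro\ge\rho$ guarantees that $\hro$-control follows from $\rho$-control on any neighbourhood, so no separate completeness hypothesis is needed on the $J$-side.
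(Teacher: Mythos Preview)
Your approach would work, but you have made the problem much harder than it is by overlooking a one-line observation. You write that ``the right strategy is not to intersect two generic sets coming from different spaces,'' but in fact that is exactly what the paper does, because the two spaces are the \emph{same} space up to isometry: from \eqref{e:metricmono} and \eqref{e:metricresol} one has
\[
\tro(A,B)=\sum_{n=1}^\infty\frac{1}{2^n}\frac{2\|J_A-J_B\|_n}{1+2\|J_A-J_B\|_n}=\hro(J_A,J_B),
\]
so $A\mapsto J_A$ is an isometry from $(\MM,\tro)$ onto $(\JJ,\hro)$. Hence the residual set $\tilde G_1\subset\JJ$ produced by Theorem~\ref{t:resol} transports to a residual set $G_1\subset\MM$ on which $J_A$ is super-regular. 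Intersecting $G_1$ with the residual set $G_2$ from Theorem~\ref{t:mono} (on which $R_A$ is super-regular) yields the desired $G=G_1\cap G_2$; a countable intersection of countable intersections of open dense sets is again such an intersection.

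Your route---re-entering the proof of Theorem~\ref{t:unified} and running Proposition~\ref{p:keyresult} simultaneously for the $R$-side and the $J$-side, with a common $\delta$ and $n_0$---is not wrong: since $\tro(A_1,A)=\hro(J_{A_1},J_A)$, control in $\tro$ gives exactly the $\hro$-control Proposition~\ref{p:keyresult} needs on the $J$-side, and the minimum/maximum trick you describe is valid. But this machinery is unnecessary; the isometry already lets you cite Theorems~\ref{t:resol} and~\ref{t:mono} as black boxes and simply intersect.
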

\begin{proof} Observe that for $A, B\in \MM$,
$$\tro(A,B)=\sum_{n=1}^{\infty}\frac{1}{2^n}\frac{\|R_{A}-R_{B}\|_{n}}{1+\|R_{A}-R_{B}\|_{n}}
=\sum_{n=1}^{\infty}\frac{1}{2^n}\frac{2\|J_{A}-J_{B}\|_{n}}{1+2\|J_{A}-J_{B}\|_{n}}=\hro(J_{A},J_{B})$$
by \eqref{e:metricmono} and \eqref{e:metricresol}.
Thus, $(\MM,\tro)$ and $(\JJ,\hro)$ are isometric under the mapping $A\mapsto J_{A}$.
Apply Theorems~\ref{t:resol} to $(\JJ,\hro)$ to obtain $\tilde{G_{1}}\subset\JJ$ such that each $T\in
\tilde{G_{1}}$ is super-regular. This $\tilde{G_{1}}$ corresponds to $G_{1}\subset\MM$ such that
each $A\in G_{1}$ has $J_{A}$ being super-regular and $G_{1}$ is a countable intersection
of open everywhere dense set in $\MM$.
Apply Theorem~\ref{t:mono} to obtain $G_{2}\subset \MM$ such that each $A\in G_{2}$ has $R_{A}$ being
super-regular and $G_{2}$ is a countable intersection
of open everywhere dense set in $\MM$. It suffices to let $G=G_{1}\cap G_{2}$.
\end{proof}

We finish this section with two examples.
\begin{example} \emph{For a maximal monotone operator $A\in\MM$, with regard to super-regularity a variety situations can happen to
$R_{A}$ and $J_{A}$.}

(1) Let $A:X\To X$ be given by $A=N_{\{0\}}$ the normal cone operator.
Then $J_{A}=0$ is super-regular, but $R_{A}=-\Id$ is not super-regular.

(2) Let $A:X\To X$ be given by $A=0$ the zero operator.
Then both $J_{A}=\Id$ and $R_{A}=\Id$ are not super-regular.

(3) Let $A:X\to X$ be given by $A=\Id$. Then $J_{A}=\Id/2$ and $R_{A}=0$ are super-regular.

\end{example}

\begin{example}\label{e:superweak} \emph{A super-regular mapping needs not be contractive.}

Define $T:\RR\rightarrow\RR$ by $T(x)=|\sin x|$ for every $x\in \RR$. Then $T$ is not contractive but super-regular.
$T$ is not contractive because $\sup_{x\in R}|T'(x)|=1.$ To see that $T$ is super-regular, we note that
$0\leq Tx\leq 1$
and
for $n\geq 2$ the ``iterative sequence" $(T^{n})_{n=2}^{\infty}$ satisfies
$$0\leq T^n(x)=\sin (T^{n-1}(x))\leq T^{n-1}(x)\qquad \text{ for every $x\in\RR$}.$$
Being a decreasing monotone sequence bounded below, $(T^{n}(x))_{n=2}^{\infty}$ converges to 0, the unique fixed point
of $T$. Since that the the decreasing function sequence $(T^{n}(x))_{n=1}^{\infty}$ converges to $0$ and that
each $T^{n}$ is continuous, $T^{n}$ converges uniformly to $0$ on every compact subset of $R$ by
the Dini's Theorem \cite{royden}.
\end{example}

The results in the next section indicate that the set of contractive mappings and the set of strongly maximal
monotone operators are too small.

\section{Weakly contractive mapping, strong monotonicity and strong firmness}\label{s:weak}
In this section we show that the set of contraction mappings in $(\NON,\rho)$ (a subset
of dense $G_{\delta}$ set in Theorem~\ref{t:reichz}),
the set of strongly monotone mappings in $(\MM,\tro)$
 and the set of strongly firm
nonexpansive mappings (a subset of dense $G_{\delta}$ set in Theorem~\ref{t:resol}) are first category,
even they are dense in the corresponding metric spaces.

\begin{definition}
A nonexpansive mapping $T\in\NON$ is weakly contractive if there exists $2>l>0$ such that
\begin{equation}\label{e:weakdef}
\|Tx-Ty\|^2\leq \|x-y\|^2-l(\|x-y\|^2+\scal{x-y}{Tx-Ty})\quad \forall \ x,y\in X.
\end{equation}
\end{definition}
The set of weakly contractive mappings
is strictly larger than the set of contractive mappings since
 $T=-\Id$ is weakly contractive but not contractive.

\begin{definition}
A firmly nonexpansive mapping $T\in\JJ$ is strongly firm nonexpansive if there exists $\varepsilon>0$ such that
$$(1+\varepsilon)T\in \JJ,$$
in particular, $T$ is $1/(1+\varepsilon)$ contractive.
\end{definition}

The following result states the relationship among $R_{A}$ being weakly contractive,
$A$ being strongly monotone  and $J_{A}$ being strongly firmly nonexpansive.
\begin{proposition}\label{p:strongmono}
 Let $A\in\MM$. Then the following are equivalent:
\begin{enumerate}
\item\label{i:weak1} $A$ is strongly monotone for some $\varepsilon>0$;
\item \label{i:weak2} $\varepsilon \Id+(1+\varepsilon) R_{A}$ is nonexpansive;
\item \label{i:weak3} $R_{A}$ is $\frac{2\varepsilon}{1+\varepsilon}$ weakly contractive;
\item \label{i:weak4} $(1+\varepsilon)J_{A}$ is firmly nonexpansive.
\end{enumerate}
\end{proposition}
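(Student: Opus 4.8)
The plan is to establish a cycle of implications among the four statements, exploiting the identities $J_A = \tfrac12(\Id + R_A)$ and $R_A = 2J_A - \Id$, and the variational characterization of monotonicity in terms of the resolvent (Fact~\ref{mintymono} and Fact~\ref{f:firm}). The most economical route is to reduce everything to a single inner-product inequality: writing $T = R_A$, for any $x,y\in X$ put $u = J_A x$, $v = J_A y$, so that $x = u + p$, $y = v + q$ with $p \in Au$, $q \in Av$, and $Tx - Ty = 2(u-v) - (x-y) = (u-v) - (p-q)$. The strong monotonicity of $A$, namely $\scal{p-q}{u-v} \ge \varepsilon\|u-v\|^2$, will be translated through this substitution into each of the three other conditions by straightforward algebra. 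I would first verify \ref{i:weak1}$\Leftrightarrow$\ref{i:weak4}: since $A$ is strongly monotone with constant $\varepsilon$ iff $A - \varepsilon\Id$ is monotone iff (by Minty/Fact~\ref{mintymono} applied after a rescaling) $(A - \varepsilon\Id + \Id)^{-1}$ is firmly nonexpansive; the operator $(A + (1-\varepsilon)\Id)^{-1}$ relates to $(1+\varepsilon)J_A$ after the substitution $A + \Id = (A+(1-\varepsilon)\Id) + \varepsilon\Id$ — one checks $(1+\varepsilon)^{-1}(A+\Id) = (1+\varepsilon)^{-1}\varepsilon\Id + (1+\varepsilon)^{-1}(A+(1-\varepsilon)\Id)$, and a short computation with Fact~\ref{f:firm}\ref{f:firmsymm} gives the equivalence. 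This is essentially Fact~\ref{f:monocontr}, restricted to one side.

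Next I would do \ref{i:weak4}$\Leftrightarrow$\ref{i:weak3}. Using $J_A = \tfrac12(\Id+T)$, firm nonexpansiveness of $(1+\varepsilon)J_A$ means, by Fact~\ref{f:firm}(iii),
\[
\Big(\tfrac{1+\varepsilon}{2}\Big)^2\|(\Id+T)x - (\Id+T)y\|^2 \le \tfrac{1+\varepsilon}{2}\scal{x-y}{(\Id+T)x-(\Id+T)y}
\]
for all $x,y$. Expanding both sides in terms of $a := x-y$ and $b := Tx-Ty$ (so $\|b\|\le\|a\|$) and collecting terms reduces this to
\[
\|b\|^2 \le \|a\|^2 - \tfrac{2\varepsilon}{1+\varepsilon}\big(\|a\|^2 + \scal{a}{b}\big),
\]
which is exactly \eqref{e:weakdef} with $l = \tfrac{2\varepsilon}{1+\varepsilon}$; note $0 < l < 2$ corresponds to $\varepsilon > 0$, so the ranges match. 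Finally, \ref{i:weak3}$\Leftrightarrow$\ref{i:weak2}: the mapping $S := \varepsilon\Id + (1+\varepsilon)R_A$ satisfies, for the same $a,b$,
\[
\|Sx - Sy\|^2 = \|\varepsilon a + (1+\varepsilon)b\|^2 = \varepsilon^2\|a\|^2 + 2\varepsilon(1+\varepsilon)\scal{a}{b} + (1+\varepsilon)^2\|b\|^2,
\]
and nonexpansiveness $\|Sx-Sy\|^2 \le \|a\|^2$ rearranges, after dividing by $(1+\varepsilon)^2$ and isolating $\|b\|^2$, into precisely the weak-contractivity inequality with $l = \tfrac{2\varepsilon}{1+\varepsilon}$ again. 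So \ref{i:weak2} and \ref{i:weak3} are literally the same inequality rewritten.

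The only genuine subtlety — and the step I would treat most carefully — is the equivalence \ref{i:weak1}$\Leftrightarrow$\ref{i:weak4}, because one must be sure that $A + (1-\varepsilon)\Id$ is still maximally monotone (so that Minty applies and its resolvent has full domain): this holds since $A$ maximally monotone and $(1-\varepsilon)\Id$ Lipschitz continuous and monotone when $\varepsilon \le 1$, and for $\varepsilon > 1$ one works instead with the already-established chain \ref{i:weak2}--\ref{i:weak4} together with the observation that \ref{i:weak4} forces $\varepsilon < 1$ anyway (a firmly nonexpansive map is $1$-Lipschitz, so $(1+\varepsilon)J_A$ being firmly nonexpansive, hence nonexpansive, combined with $J_A$ having a fixed point unless $A^{-1}(0)=\emp$, constrains $\varepsilon$; more simply, expanding \ref{i:weak3} at $y$ a fixed point of $R_A$ — if one exists — or just noting $l < 2$ is the real constraint). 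In fact the cleanest presentation avoids the domain issue entirely: prove \ref{i:weak1}$\Rightarrow$\ref{i:weak4} directly by the substitution $x = u+p,\ y = v+q$ above (strong monotonicity gives $\scal{p-q}{u-v}\ge\varepsilon\|u-v\|^2$, and one checks $(1+\varepsilon)\|u-v\|^2 \le \scal{x-y}{(1+\varepsilon)(u-v)}$ reduces to exactly this), then close the cycle \ref{i:weak4}$\Rightarrow$\ref{i:weak3}$\Rightarrow$\ref{i:weak2}$\Rightarrow$\ref{i:weak1} by the purely algebraic identities above, reading the last implication off by reversing the substitution. I expect no real obstacle beyond bookkeeping the constant $l = 2\varepsilon/(1+\varepsilon)$ consistently and keeping track that all four conditions are stated "for some $\varepsilon > 0$," so the implications may change the value of $\varepsilon$ but not its positivity.
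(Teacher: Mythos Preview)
Your proposal is correct, and your algebraic derivations of \ref{i:weak2}$\Leftrightarrow$\ref{i:weak3} and \ref{i:weak4}$\Leftrightarrow$\ref{i:weak3} are sound; the expansion of $\|\varepsilon a+(1+\varepsilon)b\|^2\le\|a\|^2$ into the weak-contractivity inequality with $l=\tfrac{2\varepsilon}{1+\varepsilon}$ is exactly what the paper does for \ref{i:weak2}$\Leftrightarrow$\ref{i:weak3}. The paper's proof, however, does not carry out the remaining equivalences at all: it simply cites \cite[Theorem~4.3]{bmw12} for \ref{i:weak1}$\Leftrightarrow$\ref{i:weak2} and \cite[Theorem~2.1(xi)]{bmw12} for \ref{i:weak1}$\Leftrightarrow$\ref{i:weak4}, and only writes out the \ref{i:weak2}$\Leftrightarrow$\ref{i:weak3} computation. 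Your route is therefore more self-contained --- in particular, your direct substitution $x=u+p$, $y=v+q$ with $(u,p),(v,q)\in\gr A$ gives \ref{i:weak1}$\Leftrightarrow$\ref{i:weak4} in one line ($(1+\varepsilon)\|u-v\|^2\le\scal{x-y}{u-v}$ is literally $\varepsilon\|u-v\|^2\le\scal{p-q}{u-v}$), and avoids any appeal to \cite{bmw12}. The trade-off is length versus dependence on prior literature.

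Two small clean-ups. First, your detour about whether $A+(1-\varepsilon)\Id$ stays maximally monotone, and the claim that \ref{i:weak4} forces $\varepsilon<1$, are both unnecessary and the latter is actually false (take $A=c\,\Id$ with $c$ large; then $(1+\varepsilon)J_A=\tfrac{1+\varepsilon}{1+c}\Id$ is firmly nonexpansive for all $\varepsilon\le c$). Your ``cleanest presentation'' via direct substitution already works for every $\varepsilon>0$, so just use that and drop the side discussion. Second, your closing remark that ``the implications may change the value of $\varepsilon$'' undersells your own argument: in fact the same $\varepsilon$ is preserved verbatim through every step of your chain, which is precisely what the proposition asserts.
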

\begin{proof}
\ref{i:weak1}$\Leftrightarrow$\ref{i:weak2}: \cite[Theorem 4.3]{bmw12}.
\ref{i:weak1}$\Leftrightarrow$\ref{i:weak4}: \cite[Theorem 2.1(xi)]{bmw12}.
\ref{i:weak2}$\Leftrightarrow$\ref{i:weak3}:
\ref{i:weak2} means for $x,y\in X$,
$$\|\varepsilon x+(1+\varepsilon)R_{A}x-(\varepsilon y+(1+\varepsilon)R_{A}y)\|^2\leq \|x-y\|^2,$$
that is,
$$\varepsilon^2\|x-y\|^2+(1+\varepsilon)^2\|R_{A}x-R_{A}y\|^2+2\varepsilon (1+\varepsilon)\scal{x-y}{R_{A}x-R_{A}y}\leq
\|x-y\|^2.$$
Simple algebraic manipulation shows that this is equivalent to
$$\|R_{A}x-R_{A}y\|^2\leq \|x-y\|^2-\frac{2\varepsilon}{1+\varepsilon}(\|x-y\|^2+\scal{x-y}{R_{A}x-R_{A}y}).$$
\end{proof}

\begin{corollary} Assume that $T\in\NON$ is a weakly contraction mapping for some
$0<l<2$. Then $\Fix(T)\neq\varnothing$ and
is a singleton.
\end{corollary}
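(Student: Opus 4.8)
The plan is to reduce the whole statement to Proposition~\ref{p:strongmono} together with the Banach Contraction Principle, exploiting the identification $\NON=\{R_A:A\in\MM\}$ from Proposition~\ref{p:relation}(i). So the proof splits into a trivial uniqueness part handled directly from \eqref{e:weakdef}, and an existence part obtained by passing to the monotone side and invoking strong monotonicity.

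First I would dispose of uniqueness, which is self-contained: if $Tx=x$ and $Ty=y$, substituting $Tx-Ty=x-y$ into \eqref{e:weakdef} gives $\|x-y\|^2\le\|x-y\|^2-l\big(\|x-y\|^2+\|x-y\|^2\big)=\|x-y\|^2-2l\|x-y\|^2$, whence $2l\|x-y\|^2\le 0$ and thus $x=y$ since $l>0$. Hence $\Fix(T)$ contains at most one point.

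For existence I would go through the reflected resolvent. By Proposition~\ref{p:relation}(i) there is $A\in\MM$ with $T=R_A$. Put $\varepsilon:=l/(2-l)$; because $0<l<2$ this is a genuine positive number with $l=2\varepsilon/(1+\varepsilon)$, so \eqref{e:weakdef} says precisely that $R_A$ is $\tfrac{2\varepsilon}{1+\varepsilon}$ weakly contractive. Then Proposition~\ref{p:strongmono} (implication \ref{i:weak3}$\Rightarrow$\ref{i:weak1}) shows that $A$ is strongly monotone. As recalled in the proof of Proposition~\ref{p:contract}(ii), strong monotonicity of $A$ forces $J_A$ to be a Banach contraction (see \cite{rockprox}), so $\Fix(J_A)$ is a nonempty singleton by the Banach Contraction Principle. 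Finally Proposition~\ref{p:relation}(iv) yields $\Fix(T)=\Fix(R_A)=\Fix(J_A)=A^{-1}(0)$, which is therefore a nonempty singleton, as claimed.

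I do not expect a real obstacle here: all the weight rests on Proposition~\ref{p:strongmono} and on the classical fact that resolvents of strongly monotone operators are contractions. The only place to stay alert is the modulus bookkeeping — the definition of \emph{weakly contractive} only asks for \emph{some} $l$ in $\left]0,2\right[$, so one must note that $\varepsilon\mapsto 2\varepsilon/(1+\varepsilon)$ carries $\RPP$ bijectively onto $\left]0,2\right[$, which is exactly what legitimizes the chosen $\varepsilon$ and makes \eqref{e:weakdef} match item \ref{i:weak3} of Proposition~\ref{p:strongmono} verbatim. (Alternatively, for existence one could invoke that a strongly monotone operator in $\MM$ is surjective, so $0\in\ran A$; but the contraction route is shorter and already available in the paper.)
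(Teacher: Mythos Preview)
Your argument is correct. Both you and the paper pass through Proposition~\ref{p:strongmono} to obtain $T=R_A$ with $A$ strongly monotone, and finish via $\Fix(T)=A^{-1}(0)$; the only divergence is in how $A^{-1}(0)\neq\varnothing$ is extracted. You use that strong monotonicity makes $J_A$ a Banach contraction (as recorded in Proposition~\ref{p:contract}(ii) and \cite{rockprox}) and apply the Banach Contraction Principle, whereas the paper invokes the Brezis--Haraux range theorem \cite[Corollary~31.6]{Simons2} to get $\ran A=X$ directly, so $0\in\ran A$. Your route is a touch more self-contained relative to what the paper has already developed (no external surjectivity theorem is needed), and your separate treatment of uniqueness straight from \eqref{e:weakdef} is a nice elementary bonus, though redundant once either existence argument is in place since strong monotonicity already forces $A^{-1}(0)$ to be a singleton. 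Amusingly, the alternative you flag in your closing parenthetical is exactly the path the paper takes.
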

\begin{proof} By Proposition~\ref{p:strongmono}, $T=R_{A}$ for a maximally monotone mapping and $A$ is strongly maximal monotone. Since $A$ is strongly monotone, we have $\ran A=X$ by Brezis-Haraux's range theorem \cite[Corollary 31.6]{Simons2} so that $A^{-1}(0)\neq\varnothing$ and
$A^{-1}(0)$ is a singleton. The proof is complete by using $\Fix(T)=A^{-1}(0)$.
\end{proof}

\begin{example}
(1) \emph{A weakly contractive mapping needs not be super-regular. }
Let $A:X\To X$ be given by $A=N_{\{0\}}$ the normal cone operator.
Then $R_{A}=-\Id$ weakly contractive but not super-regular.

(2) \emph{A super-regular mapping needs not be weakly contractive.}
From Example~\ref{e:superweak}, the mapping
$$f:\RR\rightarrow\RR: x\mapsto |\sin x|$$ is super-regular.
Since $\frac{f+\Id}{2}$ is not contractive, $f$ is not weakly contractive
by Proposition~\ref{p:strongmono}.
\end{example}

\begin{example} \emph{A nonexpansive mapping can be neither weakly contractive nor super-regular.}
On the Euclidean space $X=\RR^2$, the $\pi/2$-degree rotator $T:X\rightarrow X$ given
$$T=\begin{pmatrix}
0 & -1\\
1 & 0
\end{pmatrix}$$
is neither weakly contractive nor super-regular. Indeed,
$T$ is nonexpansive since $\|Tx\|=\|x\|$ with $x\in X$; $T$ is not weakly nonexpansive because
\eqref{e:weakdef} fails, as
$\langle x-y, Tx-Ty\rangle=0$ for $x,y\in X$; $T$ is not  super-regular because
$||T^{n}x\|=\|x\|$ for every $x\in X, n\in\NN$, and $T^{n}x\not\rightarrow 0$ unless
$x=0$.
\end{example}

The connection between weakly contractive mappings and contractive mappings comes next.
\begin{proposition}\label{p:contraction}
Let $T\in\NON$.
\begin{enumerate}
\item \label{i:contr1} If
 $T$ is a contraction with modulus $0\leq \beta<1$, i.e.,
$\|Tx-Ty\|\leq \beta \|x-y\|$ for all $x,y\in X,$
then
both $T$ and $-T$ are $(1-\beta)$ weakly contractive.
\item\label{i:contr2} If both $T$ and $-T$ are $(1-\beta)$ weakly contractive,
then $T$ is a contraction with modulus $\sqrt{\beta}$.
\end{enumerate}
\end{proposition}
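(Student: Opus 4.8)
The plan is to prove both implications by direct algebraic manipulation of the defining inequality \eqref{e:weakdef}, exploiting the symmetric appearance of the two terms $\|x-y\|^2$ and $\scal{x-y}{Tx-Ty}$ once one considers $T$ and $-T$ together. Throughout, fix $x,y\in X$ and abbreviate $u:=x-y$ and $v:=Tx-Ty$, so that $\|v\|\le\|u\|$ (nonexpansiveness) and $\scal{u}{v}\le\|u\|\,\|v\|\le\|u\|^2$ by Cauchy--Schwarz.

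For part \ref{i:contr1}, assume $\|v\|\le\beta\|u\|$. For $T$ itself, \eqref{e:weakdef} with $l=1-\beta$ asks that $\|v\|^2\le\|u\|^2-(1-\beta)(\|u\|^2+\scal{u}{v})$, i.e. $\|v\|^2\le\beta\|u\|^2-(1-\beta)\scal{u}{v}$. Since $\scal{u}{v}\le\|u\|\,\|v\|$ and $(1-\beta)\ge 0$, it suffices to check $\|v\|^2+(1-\beta)\|u\|\,\|v\|\le\beta\|u\|^2$; writing $t:=\|v\|/\|u\|\in[0,\beta]$ (the case $u=0$ being trivial), this is $t^2+(1-\beta)t-\beta\le 0$, which factors as $(t-\beta)(t+1)\le 0$ and holds for $t\in[0,\beta]$. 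For $-T$ the defining inequality becomes $\|-v\|^2\le\|u\|^2-(1-\beta)(\|u\|^2+\scal{u}{-v})$, i.e. $\|v\|^2\le\beta\|u\|^2+(1-\beta)\scal{u}{v}$, and here the extra term $(1-\beta)\scal{u}{v}$ is nonnegative when $\scal{u}{v}\ge0$ and otherwise bounded below using $\scal{u}{v}\ge-\|u\|\,\|v\|$; in the worst case this again reduces to $(t-\beta)(t+1)\le0$. So both $T$ and $-T$ are $(1-\beta)$ weakly contractive.

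For part \ref{i:contr2}, assume both $T$ and $-T$ satisfy \eqref{e:weakdef} with $l=1-\beta$. Adding the two inequalities $\|v\|^2\le\|u\|^2-(1-\beta)(\|u\|^2\pm\scal{u}{v})$ the cross terms $\pm(1-\beta)\scal{u}{v}$ cancel, leaving $2\|v\|^2\le 2\|u\|^2-2(1-\beta)\|u\|^2=2\beta\|u\|^2$, hence $\|v\|^2\le\beta\|u\|^2$, i.e. $\|Tx-Ty\|\le\sqrt{\beta}\,\|x-y\|$; since $0\le\beta<1$ (so that $l=1-\beta\in(0,1]\subset(0,2)$ makes the weak-contraction notion meaningful and $\sqrt\beta<1$), $T$ is a contraction with modulus $\sqrt\beta$.

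There is no serious obstacle here; the only point requiring a little care is the sign bookkeeping in part \ref{i:contr1}, where one must handle the term $(1-\beta)\scal{u}{v}$ in both directions and invoke Cauchy--Schwarz in the favorable direction, but this is exactly the elementary estimate $(t-\beta)(t+1)\le0$ on $[0,\beta]$ carried out above. Part \ref{i:contr2} is immediate from the cancellation trick. One should also note at the outset that the hypothesis in \ref{i:contr2} implicitly forces $\beta\in[0,1)$ for the statement to be non-vacuous, matching the conclusion of \ref{i:contr1}.
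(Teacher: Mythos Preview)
Your proof is correct and follows essentially the same strategy as the paper: part \ref{i:contr2} is identical (add the two weak-contractivity inequalities so the $\pm\scal{u}{v}$ terms cancel), and part \ref{i:contr1} uses Cauchy--Schwarz together with the contraction bound, just packaged slightly differently---the paper bounds $\scal{u}{v}\le\beta\|u\|^2$ in one step to obtain $\beta^2\|u\|^2$ on the right, whereas you bound $\scal{u}{v}\le\|u\|\,\|v\|$ and then factor the resulting quadratic $(t-\beta)(t+1)\le0$. Both arguments are equivalent elementary verifications.
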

\begin{proof}
\ref{i:contr1}:
Assume that $T$ is $\beta$ contractive.
For $x,y\in X$, by the Cauchy-Schwartz inequality and $T$ being $\beta$ contractive, we have
$$\scal{x-y}{Tx-Ty}\leq\|x-y\|\|Tx-Ty\|\leq\beta\|x-y\|^2.$$
It follows that
\begin{align}
& \|x-y\|^2-(1-\beta)(\|x-y\|^2+\scal{x-y}{Tx-Ty})\\
& \geq \|x-y\|^2-(1-\beta)(\|x-y\|^2+\beta\|x-y\|^2)\\
&=\|x-y\|^2-(1-\beta^2)\|x-y\|^2=\beta^2\|x-y\|^2\\
&\geq \|Tx-Ty\|^2.
\end{align}
Hence $T$ is $(1-\beta)$ weakly contractive. Applying to $-T$, we obtain that $-T$
is $(1-\beta)$ weakly contractive.

\ref{i:contr2}: Assume that both $T$ and $-T$ are $(1-\beta)$ weakly contractive.
Then
$$\|Tx-Ty\|^2\leq \|x-y\|^2-(1-\beta)(\|x-y\|^2+\scal{x-y}{Tx-Ty})\quad \forall \ x,y\in X.$$
$$\|Tx-Ty\|^2\leq \|x-y\|^2-(1-\beta)(\|x-y\|^2-\scal{x-y}{Tx-Ty})\quad \forall \ x,y\in X.$$
Adding these inequality gives
$$2\|Tx-Ty\|^2\leq 2\|x-y\|^2-2(1-\beta)\|x-y\|^2=2\beta\|x-y\|^2$$
which gives $\|Tx-Ty\|\leq \sqrt{\beta}\|x-y\|$ for $x,y\in X$. Hence
$T$ is a $\sqrt{\beta}$ contraction.
\end{proof}

It is very interesting to compare Proposition~\ref{p:strongmono} to Fact~\ref{f:monocontr}.

Our main result in this section is
\begin{theorem}[first category of weakly contraction mappings]\label{t:wcontraction}
 In $(\NON,\rho)$, the set of weak contraction mappings, i.e., $\kK:=$
$$\left\{
\begin{aligned} T\in\NON: &\ \exists\ l>0 \text{ such that }
\|Tx-Ty\|^2\leq \|x-y\|^2-l(\|x-y\|^2+\scal{x-y}{Tx-Ty})\\
& \quad \forall \ x,y\in X.
\end{aligned}
\right\}
$$
is of first category.
\end{theorem}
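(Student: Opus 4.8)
The plan is to express $\kK$ as a countable union of sets, each of which is shown to be closed with empty interior in $(\NON,\rho)$. For $m\in\NN$ put
$$\kK_m:=\left\{T\in\NON:\ \|Tx-Ty\|^2\leq \|x-y\|^2-\tfrac1m\big(\|x-y\|^2+\scal{x-y}{Tx-Ty}\big)\ \forall\, x,y\in X\right\}.$$
Since the defining inequality in $\kK$ holds for some $l>0$ if and only if it holds for every smaller $l$, and every such $l$ exceeds $1/m$ for some $m$, we have $\kK=\bigcup_{m\in\NN}\kK_m$. It therefore suffices to show each $\kK_m$ is nowhere dense, and for that it is enough to show $\kK_m$ is closed and has empty interior.

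First I would verify that each $\kK_m$ is $\rho$-closed. If $T_k\to T$ in $(\NON,\rho)$, then $T_k\to T$ pointwise on $X$; fixing $x,y\in X$ and passing to the limit in the inequality defining $\kK_m$ (the norm and the inner product are continuous) shows $T\in\kK_m$. Next, emptiness of the interior: given $T\in\kK_m$ and $\varepsilon>0$, I must produce some $S\in\NON\setminus\kK$ with $\rho(T,S)<\varepsilon$. The natural candidate is to perturb $T$ by a small rotation-type map in a two-dimensional subspace, or more simply to move toward a fixed nonexpansive map that violates the weak-contraction inequality. Concretely, recall from the last example of the section that the $\pi/2$-rotator $\Tt_0$ on a two-dimensional subspace satisfies $\scal{x-y}{\Tt_0 x-\Tt_0 y}=0$ for all $x,y$, so a map $S$ agreeing with such a rotator would need $\|Sx-Sy\|^2\leq(1-l)\|x-y\|^2$, forcing $S$ to be a genuine contraction there — which a norm-preserving rotator is not; hence no weak-contraction inequality can hold for it. The strategy is to build $S$ as a convex-combination-type perturbation of $T$ that, on a bounded region outside $\bB_M(0)$ (where $M$ is chosen via \eqref{e:M}-type truncation so that $\rho(T,S)<\varepsilon$ is automatic once $S$ agrees with $T$ on $\bB_M(0)$), behaves like a shifted rotator, so that for suitable pairs $x,y$ the quantity $\|Sx-Sy\|^2-\|x-y\|^2+l(\|x-y\|^2+\scal{x-y}{Sx-Sy})$ is strictly positive for every $l>0$.

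Carrying this out cleanly is the main obstacle: one needs $S$ to be \emph{globally} nonexpansive on $X$ while coinciding with $T$ on a large ball and simultaneously failing the weak-contraction inequality far out. A convenient device is to use a radial cutoff: pick a Lipschitz-$1$ construction that glues $T$ on $\bB_M(0)$ to a translated isometry outside a larger ball, with a nonexpansive interpolation in the annulus — or, even more economically, to exploit Proposition~\ref{p:strongmono}, which identifies weakly contractive $R_A$ with strongly monotone $A$; then $\kK=\{R_A:\ A\text{ strongly monotone}\}$, and one can instead argue in $(\MM,\tro)$ that strongly monotone operators, though dense (by the denseness lemma), form a first category set, e.g.\ by perturbing any strongly monotone $A$ toward a skew operator (whose resolvent's reflected resolvent is a rotator) while controlling $\tro$. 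Either way, the crux is the explicit nonexpansive perturbation with a uniform size bound; the closedness and the reduction to $\bigcup_m\kK_m$ are routine. Once $\kK_m$ is closed with empty interior for each $m$, $\kK$ is a countable union of nowhere dense sets, hence of first category, completing the proof.
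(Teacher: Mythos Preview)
Your decomposition $\kK=\bigcup_m\kK_m$ and your verification that each $\kK_m$ is $\rho$-closed are correct and match the paper exactly. The entire difficulty lies in the empty-interior step, and here your proposal has a genuine gap.

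Your plan is to keep $S=T$ on a large ball $\bB_M(0)$ and make $S$ behave like a rotator (or, via Proposition~\ref{p:strongmono}, a skew operator) far away. Two concrete problems. First, the rotator/skew idea is unavailable when $\dim X=1$: the only linear isometries of $\RR$ are $\pm\Id$, and $-\Id$ \emph{is} weakly contractive (for every $l$ the inequality becomes $\|x-y\|^2\le\|x-y\|^2$), so there is no rotator-type map to aim at. Second, even when $\dim X\ge 2$, you have not shown that the glued map is nonexpansive across pairs $(x,y)$ with $x\in\bB_M(0)$ and $y$ in the far region; a back-of-the-envelope check shows this can easily fail (try $T=-\Id$, $S=\Id$ outside), and you yourself flag this as ``the main obstacle'' without resolving it. Kirszbraun--Valentine only helps once nonexpansiveness on the partial domain is already in hand.

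The paper's construction sidesteps both issues. Given $T\in\kK_n$ and $\varepsilon>0$, it first uses Lemma~\ref{l:cdense} to pass to a nearby \emph{contraction} $T_1$ with modulus $L<1$ and fixed point $x_0$. It then sets $\tilde T_1=\Id$ on a tiny ball $\bB_\delta(x_0)$ (with $\delta=(1-L)\varepsilon/4$) and $\tilde T_1=T_1$ outside $\bB_{\varepsilon/2}(x_0)$. Because $T_1$ is a strict contraction fixing $x_0$, the slack $1-L$ is exactly what makes $\tilde T_1$ verifiably nonexpansive across the two pieces; Kirszbraun--Valentine then fills in the annulus to produce $T_2\in\NON$ with $\rho(T_2,T)\le 2\varepsilon$. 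Since $T_2=\Id$ on $\bB_\delta(x_0)$, for distinct $x,y$ there one gets $\|T_2x-T_2y\|^2=\|x-y\|^2$ while the right side of the weak-contraction inequality is $(1-2l_n)\|x-y\|^2$, so $T_2\notin\kK_n$. The key idea you are missing is this two-step move---first contract, then insert the identity near the fixed point---which both works in every dimension and makes the nonexpansive gluing checkable.
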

\begin{proof}
Let $(l_{n})_{n=1}^{\infty}$ be a positive strictly decreasing sequence in $(0,2)$ with
$\lim_{n\rightarrow\infty}l_{n}=0$. Define
\begin{equation}\label{e:nthset}
\kK_{n}:=\left\{
\begin{aligned} T\in\NON: &\
\|Tx-Ty\|^2\leq \|x-y\|^2-l_{n}(\|x-y\|^2+\scal{x-y}{Tx-Ty})\\
& \quad \forall \ x,y\in X.
\end{aligned}
\right\}.
\end{equation}
Then $\kK_{n+1}\supset \kK_{n}$ for $n\in\NN$ and
$\kK=\bigcup_{n=1}^{\infty}\kK_{n}$. Clearly $\kK_{n}$ is closed in $\NON$.
We show that $\inte\kK_{n}=\varnothing$, where $\inte\kK_{n}$ stands for the interior of $\kK_{n}$.
Let $T\in\kK_{n}$ and $\varepsilon>0$. We will construct $T_{2}\in\NON$ such that
$\rho(T,T_{2})\leq 2\varepsilon$ and $T_{2}\not\in \kK_{n}$. To this end, first apply Lemma~\ref{l:cdense} to find
a contraction map $T_{1}$ with
modulus $0<L<1$ such that
$\rho(T_{1},T)\leq \varepsilon$.  As $T_{1}:X\to X$ is a contraction, it has a fixed point
$x_{0}\in X$. Next we follow the idea from De Blasi and Myjak \cite{Blasi76}.
Put
$$0<\delta:=\frac{(1-L)\varepsilon}{4}<\frac{\varepsilon}{4}.$$
Define
$$\tilde{T_{1}}(x):=\begin{cases}
x & \text{ if $x\in \bB_{\delta}(x_{0})$}\\
T_{1}(x) & \text{ if $x\not\in \bB_{\varepsilon/2}(x_{0})$}.
\end{cases}
$$
Then $\tilde{T_{1}}$ is nonexpansive on $\bB_{\delta}(x_{0})\bigcup (X\setminus \bB_{\varepsilon/2}(x_{0}))$.
To see this, consider three cases: (i) If $x,y\in \bB_{\delta}(x_{0})$, then
$$\|\tilde{T_{1}}(x)-\tilde{T_{1}}(y)\|=\|x-y\|;$$
(ii) If $x,y\not\in \bB_{\varepsilon/2}(x_{0})$, then
$$\|\tilde{T_{1}}(x)-\tilde{T_{1}}(y)\|=\|T_{1}x-T_{1}y\|\leq L\|x-y\|;$$
(iii) $x\in \bB_{\delta}(x_{0}), y\not\in \bB_{\varepsilon/2}(x_{0})$. Note that $T_{1}x_{0}=x_{0}$,
$T_{1}$ being contractive with modulus $L$, and
\begin{align}
\|x-y\|&=\|x-x_{0}+x_{0}-y\|\geq \|x_0-y\|-\|x-x_{0}\|\label{e:distance0}\\
&\geq \frac{\varepsilon}{2}-\|x-x_{0}\|\geq \frac{\varepsilon}{2}-\delta\\
&=\frac{\varepsilon}{2}-\frac{(1-L)\varepsilon}{4}=\frac{(1+L)\varepsilon}{4}.\label{e:distance}
\end{align}
It follows that
\begin{align}
\|\tilde{T_{1}}(x)-\tilde{T_{1}}(y)\| &= \|x-T_{1}y\|\\
&=\|x-x_{0}+T_{1}x_{0}-T_{1}x+T_{1}x-T_{1}y\|\\
& \leq \|x-x_{0}\|+\|T_{1}x_{0}-T_{1}x\|+\|T_{1}x-T_{1}y\|\\
& \leq \|x-x_{0}\|+L\|x-x_{0}\|+L\|x-y\|\\
& =(1+L)\|x-x_{0}\|+L\|x-y\|\\
&\leq (1+L)\delta+L\|x-y\|\\
&=(1+L)\frac{(1-L)\varepsilon}{4}+L\|x-y\| \\
&=(1-L)\frac{(1+L)\varepsilon}{4}+L\|x-y\| \quad (\text{using }\eqref{e:distance0}-\eqref{e:distance})\\
& \leq (1-L)\|x-y\|+L\|x-y\|=\|x-y\|.
\end{align}
According to the Kirszbraun-Valentine extension theorem, see, e.g., \cite{reich05},
there exists a nonexpansive mapping $T_2:X\to X$ extending $\tilde{T_{1}}$ from $\dom \tilde{T_{1}}$ to
$X$.

{\sl Claim 1:} $\rho(T_{2},T_{1})\leq \varepsilon$.

To see this, observe that $T_{2}x=\tilde{T_{1}}(x)=T_{1}(x)$ if $x\in X\setminus \bB_{\varepsilon/2}(x_{0})$.
When $x\in \bB_{\delta}(x_{0})$ we have
\begin{align}
\|T_{2}x-T_{1}(x)\| & =\|x-T_{1}x\|=\|x-x_{0}+T_{1}x_{0}-T_{1}x\|\\
& \leq \|x-x_{0}\|+\|T_{1}x_{0}-T_{1}x\|
\leq \|x-x_{0}\|+L\|x-x_{0}\|\\
&=(1+L)\|x-x_{0}\|\leq (1+L)\delta=(1+L) \frac{(1-L)\varepsilon}{4}\leq \varepsilon;
\end{align}
When $x\in \bB_{\varepsilon/2}(x_{0})\setminus \bB_{\delta}(x_{0})$, pick
$$y:=x_{0}+\frac{\varepsilon}{2}\frac{y-x_{0}}{\|y-x_{0}\|}$$
so that $y\in \bB_{\varepsilon/2}(x_{0})$ and $T_{2}y=T_{1}y$. We have
\begin{align}
\|T_{2}x-T_{1}x\| &=\|T_{2}x-T_{1}x-(T_{2}y-T_{1}y)\|=\|(T_{2}x-T_{2}y)-(T_{1}x-T_{1}y)\|\\
&\leq \|T_{2}x-T_{2}y\|+\|T_{1}x-T_{1}y\|\\
&\leq \|x-y\|+L\|x-y\|=(1+L)\|x-y\|\leq (1+L)(\varepsilon/2-\delta)\leq\varepsilon.
\end{align}
Then
$$\rho(T,T_{2})\leq \rho(T,T_{1})+\rho(T_{1},T_{2})\leq 2\varepsilon.$$

{\sl Claim 2:} $T_{2}\not\in \kK_{n}$. This is because $T_{2}x=x$ for $x\in \bB_{\delta}(x_{0})$.

Since $\varepsilon$ was arbitrary, $\inte \kK_{n}=\varnothing$.
This completes the proof.
\end{proof}

Combing Theorem~\ref{t:wcontraction} and Proposition~\ref{p:contraction}(i) immediately yields
\begin{corollary}[first category of contraction mappings]\label{c:unbound} In $(\NON,\rho)$, the set of contractive mappings, i.e., $\kK=$
$$\left\{
T\in\NON: \ \exists\ 1>l\geq 0 \text{ such that }
\|Tx-Ty\|\leq l\|x-y\|\  \forall \ x,y\in X
\right\}
$$
is of first category.
\end{corollary}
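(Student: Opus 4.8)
The plan is to realize the set of contraction mappings as a subset of the already-analyzed set $\kK$ of weakly contractive mappings, and then to conclude by Theorem~\ref{t:wcontraction} together with the elementary fact that every subset of a first-category set is itself of first category.

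First I would invoke Proposition~\ref{p:contraction}\ref{i:contr1}: if $T\in\NON$ is a contraction with modulus $0\le\beta<1$, then $T$ is $(1-\beta)$-weakly contractive. Since $0\le\beta<1$ forces $0<1-\beta\le 1<2$, the number $l:=1-\beta$ is an admissible modulus in the definition of weak contractivity, so $T\in\kK$. Hence the set of all contraction mappings in $(\NON,\rho)$ is contained in $\kK$.

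Next I would appeal to Theorem~\ref{t:wcontraction}, which gives $\kK=\bigcup_{n=1}^{\infty}\kK_{n}$ with each $\kK_{n}$ closed and having empty interior, so that $\kK$ is of first category in $(\NON,\rho)$. The set of contractions, being contained in $\kK$, is therefore contained in the same countable union of closed nowhere dense sets $\kK_{n}$, and so is itself of first category in $(\NON,\rho)$.

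I do not expect any genuine obstacle here: the only thing to verify is the trivial inclusion of the admissible range of moduli, while all the substantive work — the De Blasi–Myjak style perturbation producing, near an arbitrary $T\in\kK_{n}$, a nonexpansive map that agrees with the identity on a small ball and hence fails the defining inequality of $\kK_{n}$ — has already been done inside the proof of Theorem~\ref{t:wcontraction}.
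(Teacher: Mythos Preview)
Your proposal is correct and matches the paper's own argument exactly: the paper simply states that the corollary follows immediately from combining Theorem~\ref{t:wcontraction} with Proposition~\ref{p:contraction}\ref{i:contr1}, which is precisely the inclusion-plus-first-category reasoning you spell out.
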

While a similar result for nonexpansive mappings
defined on a closed bounded convex set $C\subset X$ was obtained by
De Blasi and Myjak in \cite{Blasi76} and Reich \cite{reichcom}, Corollary~\ref{c:unbound}
concerns nonexpansive mappings on
a unbounded set $X$.

There are many ways to generate strongly monotone mappings: $A+\varepsilon\Id$ (Tychonov regularization),
$S_{A}^{\varepsilon}=\cR(A,\Id, 1-\varepsilon,\varepsilon)$ (self-dual regularization), see, e.g., \cite{wang11}.
Corresponding results for maximal monotone operators and firmly nonexpansive mappings follow at once by
combing Proposition~\ref{p:strongmono} and Theorem~\ref{t:wcontraction}.

\begin{corollary}[first category of strongly monotone mappings] In $(\MM,\tro)$, the set
$$\{A\in\MM:\ \exists \ \varepsilon> 0 \text{ such that $A$ is $\varepsilon$ strongly monotone}\}$$
is of first category.
\end{corollary}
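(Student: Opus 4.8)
The plan is to transport Theorem~\ref{t:wcontraction} along the canonical isometry between $(\MM,\tro)$ and $(\NON,\rho)$. Recall from Proposition~\ref{p:mcomp}(i) that $\Phi\colon A\mapsto R_{A}$ is an isometry from $(\MM,\tro)$ onto $(\NON,\rho)$; in particular it is a homeomorphism, hence it maps nowhere dense sets to nowhere dense sets and first category sets to first category sets, and the same holds for $\Phi^{-1}$. Writing $\SC$ for the set appearing in the statement, it therefore suffices to show that $\Phi(\SC)$ is of first category in $(\NON,\rho)$, and for this I will identify $\Phi(\SC)$ with the set $\kK$ of weakly contractive mappings of Theorem~\ref{t:wcontraction}.

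To see $\Phi(\SC)=\kK$, I would argue both inclusions using Proposition~\ref{p:strongmono}. If $A\in\SC$ is $\varepsilon$ strongly monotone for some $\varepsilon>0$, then by the implication \ref{i:weak1}$\Rightarrow$\ref{i:weak3} of Proposition~\ref{p:strongmono} the reflected resolvent $R_{A}$ is $\tfrac{2\varepsilon}{1+\varepsilon}$ weakly contractive, and since $\tfrac{2\varepsilon}{1+\varepsilon}\in(0,2)$ this gives $R_{A}=\Phi(A)\in\kK$; thus $\Phi(\SC)\subseteq\kK$. Conversely, let $T\in\kK$, say $T$ is $l$ weakly contractive with $0<l<2$. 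By Proposition~\ref{p:relation}(i) there exists $A\in\MM$ with $T=R_{A}$, and setting $\varepsilon:=\tfrac{l}{2-l}>0$ (so that $l=\tfrac{2\varepsilon}{1+\varepsilon}$) the implication \ref{i:weak3}$\Rightarrow$\ref{i:weak1} of Proposition~\ref{p:strongmono} shows that $A$ is $\varepsilon$ strongly monotone, i.e.\ $A\in\SC$ and $T=\Phi(A)\in\Phi(\SC)$. Hence $\Phi(\SC)=\kK$.

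Finally, Theorem~\ref{t:wcontraction} asserts that $\kK$ is of first category in $(\NON,\rho)$, so $\SC=\Phi^{-1}(\kK)$ is of first category in $(\MM,\tro)$, which is exactly the claim. I do not anticipate a genuine obstacle here: this is a routine corollary. The one point deserving a line of care is that $\Phi(\SC)$ equals $\kK$ rather than being merely contained in it, which is why both implications of Proposition~\ref{p:strongmono}~\ref{i:weak1}$\Leftrightarrow$\ref{i:weak3} are used, together with the surjectivity of $A\mapsto R_{A}$ onto $\NON$ from Proposition~\ref{p:relation}(i); everything else is the general fact that isometries preserve the Baire category of a set.
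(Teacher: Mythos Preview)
Your proposal is correct and follows exactly the route the paper indicates: combine Proposition~\ref{p:strongmono} (equivalence of strong monotonicity of $A$ and weak contractivity of $R_A$) with Theorem~\ref{t:wcontraction}, transported via the isometry $A\mapsto R_A$ of Proposition~\ref{p:mcomp}(i). One small remark: for the first-category conclusion you only need the inclusion $\Phi(\SC)\subseteq\kK$, since subsets of first category sets are first category; the reverse inclusion, while true, is not required, so your ``point deserving a line of care'' is in fact dispensable.
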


\begin{corollary}[first category of strongly firm nonexpansive mappings] In $(\JJ,\hro)$, the set
$$\{T\in\JJ:\ \exists\ \varepsilon >0  \text{ such that $(1+\varepsilon)T$ is firmly nonexpansive}\}$$
is of first category.
\end{corollary}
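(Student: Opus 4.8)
The plan is to transport the first-category conclusion of Theorem~\ref{t:wcontraction} from $(\NON,\rho)$ to $(\JJ,\hro)$ along the isometry $\Phi\colon T\mapsto 2T-\Id$. First I would record that, by \eqref{e:isometry} together with Proposition~\ref{p:relation}(iii), $\Phi$ is a \emph{surjective} isometry of $(\JJ,\hro)$ onto $(\NON,\rho)$, hence a homeomorphism. Consequently $\Phi$ maps nowhere dense sets onto nowhere dense sets, because $\overline{\Phi(N)}=\Phi(\overline N)$ and $\inte\overline{\Phi(N)}=\Phi(\inte\overline N)$, and therefore it carries first-category sets onto first-category sets (and $\Phi^{-1}$ does the same).

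Next I would pin down the image under $\Phi$ of the set in question, $\SC:=\{T\in\JJ:\ \exists\,\varepsilon>0,\ (1+\varepsilon)T\in\JJ\}$. Writing $T=J_A$ with $A\in\MM$ (Proposition~\ref{p:relation}(ii)), we have $\Phi(T)=2J_A-\Id=R_A$, and by Proposition~\ref{p:strongmono}, the chain (i)$\Leftrightarrow$(iii)$\Leftrightarrow$(iv) gives: $(1+\varepsilon)J_A$ is firmly nonexpansive for some $\varepsilon>0$ $\iff$ $A$ is strongly monotone $\iff$ $R_A$ is $\tfrac{2\varepsilon}{1+\varepsilon}$ weakly contractive. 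Since $\varepsilon\mapsto\tfrac{2\varepsilon}{1+\varepsilon}$ maps $\RPP$ onto $\left]0,2\right[$, and since $\NON=\{R_A:A\in\MM\}$ by Proposition~\ref{p:relation}(i), it follows that $\Phi(\SC)$ is precisely the set of weakly contractive mappings in $\NON$. That set is $\kK$: if $S\in\NON$ is $l$ weakly contractive for some $l>0$, then, because $\|x-y\|^2+\scal{x-y}{Sx-Sy}\ge 0$ by Cauchy--Schwarz, $S$ is also $l'$ weakly contractive for every $0<l'\le l$, so the requirement ``$l>0$'' in the definition of $\kK$ may be taken to be ``$l\in\left]0,2\right[$'' (this is the very equivalence already used in the proof of Theorem~\ref{t:wcontraction} to write $\kK=\bigcup_n\kK_n$). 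Hence $\Phi(\SC)=\kK$.

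Finally, Theorem~\ref{t:wcontraction} asserts that $\kK$ is of first category in $(\NON,\rho)$; applying $\Phi^{-1}$ and the homeomorphism invariance of category from the first step, $\SC=\Phi^{-1}(\kK)$ is of first category in $(\JJ,\hro)$, which is the claim. (Equivalently, one can bypass $\kK$ altogether: $\SC=\bigcup_{n}\{T\in\JJ:(1+\varepsilon_n)T\in\JJ\}$ for any $\varepsilon_n\downarrow 0$, and each summand equals $\Phi^{-1}(\kK_{n})$ with $\kK_n$ as in \eqref{e:nthset}, hence is closed with empty interior by the argument in the proof of Theorem~\ref{t:wcontraction}.) I do not expect a genuine obstacle here: the only points requiring attention are the \emph{surjectivity} of $\Phi$ (so that $\Phi(\SC)$ is literally all of $\kK$, not a proper subset) and the elementary bookkeeping reconciling the modulus ranges $l>0$ and $l\in\left]0,2\right[$ with the values $\tfrac{2\varepsilon}{1+\varepsilon}$ occurring in Proposition~\ref{p:strongmono}. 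A direct De Blasi--Myjak-type perturbation carried out inside $\JJ$ would be the less pleasant alternative, since one would have to keep the perturbed map \emph{firmly} nonexpansive after the Kirszbraun--Valentine extension; that is exactly the difficulty the isometry route sidesteps.
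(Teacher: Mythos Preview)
Your proposal is correct and follows the same route the paper takes: the paper's own argument for this corollary is the single sentence ``follows at once by combining Proposition~\ref{p:strongmono} and Theorem~\ref{t:wcontraction}'', and you have simply spelled out the details of that combination via the isometry $\Phi$. Your careful bookkeeping on surjectivity and on reconciling the modulus ranges is more than the paper provides, but it is exactly the content the paper is pointing to.
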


\section*{Appendix}
For $C\subset X$, $\overline{C}$ denotes its norm closure. The proofs to Theorems~\ref{t:unified}, \ref{t:resol} and
\ref{t:mono} are harder, and rely on Reich and Zaslavaski's super-regularity mappings. If one only
wants $0\in \overline{\ran(\Id-T)}$, $0\in\overline{\ran A}$ and asymptotic regularity of $J_{A}$ (much weaker results),
a much
simpler argument works. This is the purpose of this appendix.
\begin{theorem}[almost fixed point of nonexpansive mapping]\label{t:nonexpansive} The set
\begin{equation}\label{e:almostfixpoint}
G:=\{T:\ 0\in\overline{\ran (\Id- T)}\}
\end{equation}
is dense $G_{\delta}$ in $(\NON,\rho)$. Thus, generically nonexpansive mappings almost have fixed points.
\end{theorem}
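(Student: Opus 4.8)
The plan is to exhibit $G$ as a countable intersection of dense open sets in $(\NON,\rho)$. For each $m\in\NN$ set
\[
O_m:=\Bigl\{T\in\NON:\ \exists\, x\in X\ \text{ with }\ \|x-Tx\|<\tfrac1m\Bigr\}.
\]
Since $\|\cdot-\cdot\|$ and the evaluation maps are continuous in the metric $\rho$ (which encodes uniform convergence on balls), each $O_m$ is open: if $T\in O_m$ is witnessed by a point $x$ with $\|x-Tx\|<1/m$, then any $S$ with $\|S-T\|_{\lceil\|x\|\rceil}$ small enough still satisfies $\|x-Sx\|<1/m$, and smallness of $\rho(S,T)$ forces smallness of $\|S-T\|_{\lceil\|x\|\rceil}$. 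Clearly $\bigcap_{m=1}^\infty O_m=\{T:\inf_{x\in X}\|x-Tx\|=0\}=\{T:0\in\overline{\ran(\Id-T)}\}=G$, so by the Baire Category Theorem (Fact~\ref{f:baire}) it suffices to show each $O_m$ is dense.

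For density I would use Lemma~\ref{l:cdense}(i): contractions are dense in $(\NON,\rho)$, and every contraction $T_1$ has a (unique) fixed point $x_0$, hence satisfies $\|x_0-T_1x_0\|=0<1/m$, i.e. $T_1\in O_m$. Thus $\CC\subseteq O_m$ for every $m$, and since $\CC$ is dense, each $O_m$ is dense. Therefore $G=\bigcap_m O_m$ is a dense $G_\delta$ subset of $(\NON,\rho)$, which is the assertion.

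The only mild subtlety — and the step I would write out carefully — is the openness of $O_m$: one must check that $\rho(S,T)<\delta$ genuinely controls $\|Sx-Tx\|$ for the fixed witness $x$, which follows because for $n\ge\|x\|$ the $n$-th term of the series defining $\rho$ dominates $2^{-n}\frac{\|S-T\|_n}{1+\|S-T\|_n}\ge 2^{-n}\frac{\|Sx-Tx\|}{1+\|Sx-Tx\|}$, so $\rho(S,T)$ small forces $\|Sx-Tx\|$ small; a strict inequality $\|x-Tx\|<1/m$ then persists. No other part of the argument presents any obstacle, since the existence of almost fixed points for contractions is immediate and the Baire machinery is exactly as in Theorem~\ref{t:unified}.
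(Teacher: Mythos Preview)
Your proposal is correct and follows essentially the same argument as the paper: define $O_m=\{T:\exists\,x,\ \|x-Tx\|<1/m\}$, show each $O_m$ is open using that $\rho$ controls $\|\cdot\|_n$ at the witness point, show density via Lemma~\ref{l:cdense}(i) and the Banach Contraction Principle, and conclude $G=\bigcap_m O_m$ is dense $G_\delta$. The paper's only difference is cosmetic---it writes out an explicit radius $r=2^{-K}\frac{1/n-\|x-Tx\|}{1+1/n-\|x-Tx\|}$ for the openness step rather than arguing qualitatively as you do.
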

\begin{proof} For every $n\in\NN$ define
$$O_{n}:=\left\{T\in\NON:\ \text{ there exists $x\in X$ such that } \|x-Tx\|<\frac{1}{n}\right\}.$$
{\sl Claim 1.} $O_{n}$ is dense. Let $T\in \NON$ and $\varepsilon>0$. Apply Lemma~\ref{l:cdense}
to find a contraction $T_{2}$ such that $\rho(T,T_{2})<\varepsilon$.
Since $T_{2}$ is a contraction, it has a fixed point
by the Banach Contraction Principle \cite[Theorem 5.1.2]{kreyzig}, thus
$T_{2}\in O_{n}$.
Therefore, $O_{n}$ is dense in $\NON$.

\noindent{\sl Claim 2.} $O_{n}$ is open. Let $T\in O_{n}$. Then there exists $x\in X$ such that
\begin{equation}\label{e:x}
\|x-Tx\|<\frac{1}{n}.
\end{equation}
Assume that $K\in \NN$ and $\|x\|<K$. Put
$$r=\frac{1}{2^{K}}\frac{1/n-\|x-Tx\|}{1+1/n-\|x-Tx\|}.$$
We show that $\bB_{r}(T):=\{T_{1}\in \NON: \rho(T_{1},T)<r\}\subset O_{n}$. Let $T_{1}\in \bB_{r}(T)$.
Since $\rho(T_{1},T)<r$, we have
$$\frac{1}{2^K}\frac{\|T_{1}-T\|_{K}}{1+\|T_{1}-T\|_{K}}\leq \rho(T_{1},T)<\frac{1}{2^{K}}\frac{1/n-\|x-Tx\|}{1+1/n-\|x-Tx\|}$$
so that
$$\frac{\|T_{1}-T\|_{K}}{1+\|T_{1}-T\|_{K}}<\frac{1/n-\|x-Tx\|}{1+1/n-\|x-Tx\|}.$$
It follows that
$$\|T_{1}-T\|_{K}<\frac{1}{n}-\|x-Tx\|.$$
Then using $\|x\|\leq K$,
\begin{align}
\|x-T_{1}x\|& =\|x-Tx+Tx-T_{1}x\|\leq \|x-Tx\|+\|Tx-T_{1}x\|\\
& \leq \|x-Tx\|+\|T-T_{1}\|_{K}<\|x-Tx\|+\frac{1}{n}-\|x-Tx\|=\frac{1}{n}.
\end{align}
Therefore $T_{1}\in O_{n}$. Since $T_{1}\in \bB_{r}(T)$ was arbitrary, $\bB_{r}(T)\subset O_{n}$.

As $(\NON,\rho)$ is a complete metric space, $\bigcap_{n=1}^{\infty}O_{n}$ is a dense $G_{\delta}$ set
in $\NON$ by Fact~\ref{f:baire}.
If $T\in \bigcap_{n=1}^{\infty}O_{n}$, then for every $n\in\NN$ there exists $x_{n}\in X$ such that
$$\|x_{n}-Tx_{n}\|<\frac{1}{n}$$
thus $0\in\overline{\ran(\Id-T)}$.
Hence $\bigcap_{n=1}^{\infty}O_{n}\subset G$.
On the other hand, if $T\in G$, then $0\in \overline{\ran (\Id -T)}$. It follows that for every $n$ there exists
$x_{n}\in X$ such that $\|x_{n}-Tx_{n}\|<1/n$ so $T\in O_{n}$. As this holds for every $n$ and $T\in G$, we have
$G\subset \bigcap_{n=1}^{\infty}O_{n}$. Altogether, $G=\bigcap_{n=1}^{\infty} O_{n}$ which is a dense
$G_{\delta}$ set in $\NON$. This completes the
proof.
\end{proof}

\begin{theorem}[almost zeros of maximal monotone operator]\label{t:monotone}
 In $(\MM,\tro)$, the set
\begin{equation}\label{e:crange}
\{A\in\MM:\ 0\in\overline{\ran A}\}
\end{equation}
is a dense $G_{\delta}$ set. Hence, generically  maximally monotone operators almost have zeros.
\end{theorem}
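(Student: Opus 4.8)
The plan is to deduce Theorem~\ref{t:monotone} from Theorem~\ref{t:nonexpansive} via the isometry $A\mapsto R_{A}$ between $(\MM,\tro)$ and $(\NON,\rho)$ furnished by Proposition~\ref{p:mcomp}(i); recall also Proposition~\ref{p:relation}(i), so that $A\mapsto R_{A}$ is a bijection of $\MM$ onto $\NON$. Since an isometric bijection between complete metric spaces maps open dense sets to open dense sets, and hence dense $G_{\delta}$ sets to dense $G_{\delta}$ sets, it suffices to show that $A\mapsto R_{A}$ carries $\{A\in\MM:\ 0\in\overline{\ran A}\}$ onto the set $G=\{T\in\NON:\ 0\in\overline{\ran(\Id-T)}\}$ of Theorem~\ref{t:nonexpansive}.

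To this end I would first establish, for $A\in\MM$, the range identity $\ran(\Id-R_{A})=2\,\ran A$. Indeed, from $R_{A}+R_{A^{-1}}=0$ and $R_{A^{-1}}=2J_{A^{-1}}-\Id$ we get $\Id-R_{A}=\Id+R_{A^{-1}}=2J_{A^{-1}}$, so $\ran(\Id-R_{A})=2\,\ran J_{A^{-1}}$; and since $J_{A^{-1}}=(A^{-1}+\Id)^{-1}$, its range is $\dom(A^{-1}+\Id)=\dom A^{-1}=\ran A$. Passing to closures (scaling by $2$ is a homeomorphism, so $\overline{2S}=2\overline{S}$) we obtain $0\in\overline{\ran A}\iff 0\in\overline{\ran(\Id-R_{A})}$, which is exactly the statement that $A\mapsto R_{A}$ maps $\{A\in\MM:\ 0\in\overline{\ran A}\}$ bijectively onto $G$.

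With this in place the conclusion is immediate: Theorem~\ref{t:nonexpansive} gives $G=\bigcap_{n=1}^{\infty}O_{n}$ with each $O_{n}$ open and dense in $(\NON,\rho)$, hence $\{A\in\MM:\ 0\in\overline{\ran A}\}=\bigcap_{n=1}^{\infty}\{A\in\MM:\ R_{A}\in O_{n}\}$ is a countable intersection of open dense subsets of $(\MM,\tro)$, and Fact~\ref{f:baire} yields density. I do not expect a genuine obstacle: the only delicate point is the bookkeeping with ranges and closures in the middle paragraph, since all the real work (openness, denseness, the Baire argument) has already been done in Theorem~\ref{t:nonexpansive}. A more self-contained alternative, should one prefer to bypass Theorem~\ref{t:nonexpansive}, is to run the same scheme directly: set $O_{n}:=\{A\in\MM:\ (\exists\,x\in X)(\exists\,u\in Ax)\ \|u\|<1/n\}$, verify openness using the metric $\tro$, and verify denseness by noting that strongly monotone operators are dense in $\MM$ and satisfy $\ran A=X\ni 0$; then $\{A\in\MM:\ 0\in\overline{\ran A}\}=\bigcap_{n=1}^{\infty}O_{n}$.
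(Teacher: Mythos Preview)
Your proposal is correct and follows essentially the same route as the paper: transfer Theorem~\ref{t:nonexpansive} through the isometry $A\mapsto R_{A}$ of Proposition~\ref{p:mcomp}(i), and reduce the condition $0\in\overline{\ran(\Id-R_{A})}$ to $0\in\overline{\ran A}$ via the range identity $\ran(\Id-R_{A})=2\ran A$. The paper derives that identity by the chain $\Id-R_{A}=2(\Id-J_{A})=2J_{A^{-1}}$ from $J_{A}+J_{A^{-1}}=\Id$, whereas you use the equivalent $R_{A}+R_{A^{-1}}=0$; this is only a cosmetic difference.
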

\begin{proof}
By Theorem~\ref{t:nonexpansive} and Proposition~\ref{p:relation}(i), the set
$$\{A\in \MM: 0\in\overline{\ran(\Id-R_{A})}\}$$
is dense $G_{\delta}$ in $(\MM,\tro)$. Observe that
$$\ran (\Id-R_{A})=\ran (2\Id -2J_{A})=2\ran(\Id-J_{A})=2\ran J_{A^{-1}}=2\dom A^{-1}=2\ran A.$$
Hence \eqref{e:crange} holds.
\end{proof}


Recall that $T:X\to X$ is \emph{asymptotically regular} at $x$ if $\lim_{n\rightarrow\infty}
(T^{n+1}x-T^{n}x)=0$,
cf. \cite{bruck,BC2011}. Asymptotic regularity is one of critical properties in many iterative algorithms,
\cite[page 79]{BC2011}, \cite{baillon},

\begin{theorem}[asymptotic regularity of resolvent]\label{t:resolvent}
 In $(\JJ,\hro)$, the set
\begin{equation}\label{e:difference}
\{T\in \JJ:\ \|T^{n+1}x-T^{n}x\|\rightarrow 0 \ \forall x\in X\}
\end{equation}
is a dense $G_{\delta}$ set. Consequently, generically resolvents are asymptotically regular.
\end{theorem}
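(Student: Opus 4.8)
The plan is to transcribe the proof of Theorem~\ref{t:nonexpansive} into the complete metric space $(\JJ,\hro)$: I would realize the set in \eqref{e:difference} as a countable intersection of open dense subsets of $\JJ$ whose members possess approximate fixed points, and then use the firm nonexpansiveness to upgrade ``approximately fixed at one point'' into ``asymptotically regular at every point''. The Baire Category Theorem then yields the conclusion.

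For $m\in\NN$ I would set
$$O_m:=\Big\{T\in\JJ:\ \exists\,x\in X\ \text{ with }\ \|x-Tx\|<\tfrac1m\Big\}.$$
\textbf{Openness.} If $T\in O_m$ with witness $x$, $\|x\|<K$, then exactly as in the proof of Theorem~\ref{t:nonexpansive} any self-map $T_1$ of $X$ with $\|T_1-T\|_K<1/m-\|x-Tx\|$ already satisfies $\|x-T_1x\|<1/m$; since $\hro\ge\rho$ on $\JJ$ (as computed in the proof of Theorem~\ref{t:resol}), a small enough $\hro$-ball about $T$ is contained in the corresponding $\rho$-ball, hence in $O_m$. \textbf{Density.} By Lemma~\ref{l:cdense}(ii) every $T\in\JJ$ is an $\hro$-limit of contractions in $\JJ$, and each such contraction has a fixed point by the Banach Contraction Principle, so it lies in $O_m$. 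Thus $\bigcap_{m=1}^{\infty}O_m$ is a dense $G_\delta$ subset of $(\JJ,\hro)$ by Fact~\ref{f:baire}.

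It remains to identify $\bigcap_m O_m$ with the set in \eqref{e:difference}. The inclusion ``$\supseteq$'' is immediate: if $\|T^{n+1}x-T^nx\|\to0$ then, for each $m$, the iterate $y:=T^nx$ with $n$ large satisfies $\|y-Ty\|=\|T^nx-T^{n+1}x\|<1/m$. For ``$\subseteq$'', if $T\in\bigcap_m O_m$ then $d(T):=\inf_{y\in X}\|y-Ty\|=0$, and I would conclude via the following lemma: \emph{for any firmly nonexpansive $T$ the limit $\ell:=\lim_n\|T^{n+1}x-T^nx\|$ exists and equals $d(T)$ for every $x$.} The limit exists because $(\|T^{n+1}x-T^nx\|)_n$ is nonincreasing (nonexpansiveness). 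It is independent of $x$: writing $u_n:=T^nx-T^ny$, firm nonexpansiveness gives $\|u_{n+1}\|^2\le\langle u_{n+1},u_n\rangle$, hence $\|u_n-u_{n+1}\|^2\le\|u_n\|^2-\|u_{n+1}\|^2\to0$ since the norms $\|u_n\|$ decrease, and then $(T^{n+1}x-T^nx)-(T^{n+1}y-T^ny)=u_{n+1}-u_n\to0$ forces $\ell(x)=\ell(y)$. Finally $\ell\ge d(T)$ since each displacement $\|T(T^nx)-T^nx\|\ge d(T)$, while $\ell=\ell(x)\le\|Tx-x\|$ for every $x$ gives $\ell\le d(T)$; so $\ell=d(T)$. (This is classical in the theory of the proximal point algorithm and Krasnoselskii--Mann iterations; see \cite{bruck,BC2011}.) With $d(T)=0$ the lemma puts $T$ in the set \eqref{e:difference}, so \eqref{e:difference} $=\bigcap_m O_m$ is dense $G_\delta$; the final sentence of the theorem is then just Proposition~\ref{p:relation}(ii), which identifies $\JJ$ with the set of resolvents.

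The only non-routine step is the lemma above: passing from a single approximate fixed point to asymptotic regularity \emph{everywhere}. This genuinely uses firm nonexpansiveness (equivalently $T=\tfrac{\Id+N}{2}$ with $N$ nonexpansive); it fails for general nonexpansive maps — for a rotation the displacement norm is a nonzero constant although $d(T)=0$ — so Theorem~\ref{t:nonexpansive} cannot simply be invoked. Everything else (openness, density from Lemma~\ref{l:cdense}(ii), completeness of $(\JJ,\hro)$, and the Baire step) is a straightforward adaptation.
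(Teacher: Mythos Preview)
Your proof is correct and follows the same two-stage architecture as the paper: first show that $\{T\in\JJ:\ 0\in\overline{\ran(\Id-T)}\}=\bigcap_m O_m$ is dense $G_\delta$ in $(\JJ,\hro)$, then identify this set with \eqref{e:difference} via a Bruck--Reich type displacement lemma. The differences are in execution. For the first stage the paper simply pulls back Theorem~\ref{t:nonexpansive} through the isometry $(\JJ,\hro)\cong(\NON,\rho)$ (the map $T\mapsto 2T-\Id$, noting $\ran(\Id-(2T-\Id))=2\ran(\Id-T)$), whereas you redo the open/dense verification inside $\JJ$ using $\hro\ge\rho$ and Lemma~\ref{l:cdense}(ii). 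For the second stage the paper quotes \cite[Corollary~1.5]{bruck}---valid for all strongly nonexpansive maps---to get $T^nx-T^{n+1}x\to v$ with $v$ the least-norm element of $\overline{\ran(\Id-T)}$; you instead give a short self-contained argument, specific to firmly nonexpansive $T$, that $\lim_n\|T^{n+1}x-T^nx\|=d(T)$ (your computation $\|u_n-u_{n+1}\|^2\le\|u_n\|^2-\|u_{n+1}\|^2$ is exactly the firm-nonexpansiveness inequality, and the $x$-independence plus the sandwich $d(T)\le\ell\le\inf_x\|x-Tx\|=d(T)$ are clean). Your route is more elementary and avoids the external citation; the paper's is shorter because it recycles Theorem~\ref{t:nonexpansive} and an off-the-shelf result. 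Either way the set \eqref{e:difference} is literally equal to $\bigcap_m O_m$, so the ``dense $G_\delta$'' (not merely residual) claim is justified in both proofs.
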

\begin{proof}
Each $T\in \JJ$ is firmly nonexpansive, so strongly nonexpansive.
By \cite[Corollary 1.5]{bruck} Bruck and Reich,
\begin{equation}\label{e:br}
\lim_{n\rightarrow \infty} (T^{n}x-T^{n+1}x)=v
\end{equation}
where $v$ is the smallest norm element of $\overline{\ran(\Id-T)}$. It follows for Theorem~\ref{t:nonexpansive}
and \eqref{e:isometry}
that the set
$$\{T\in\JJ:\ 0\in \overline{\ran(\Id-(2T-\Id))}\}$$
i.e., $$\{T\in \JJ:\ 0\in \overline{\ran(\Id-T)}\}$$ is a dense $G_{\delta}$ set in
$\JJ$. It suffices to apply \eqref{e:br}.
\end{proof}

\section*{Acknowledgments}
I would like to thank Dr. Heinz Bauschke for his constructive suggestions and comments on the paper.
Xianfu Wang was partially
supported by the Natural Sciences and Engineering Research Council
of Canada.


\small

\end{document}